\newtheorem{theorem}{Theorem}[section]
\newtheorem{proposition}[theorem]{Proposition}
\newtheorem{corollary}[theorem]{Corollary}
\theoremstyle{definition}
\theoremstyle{remark}
\numberwithin{equation}{section}
\newcommand{\f}{\varphi}
\newcommand{\g}{\tilde{g}}
\newcommand{\n}{\nabla}
\newcommand{\ttt}{\tilde\tau}
\newcommand{\M}{(M,\A\f,\A\xi,\A\eta,\A{}g)}
\newcommand{\I}{\iota}
\newcommand{\R}{\mathbb R}
\newcommand{\F}{\mathcal{F}}
\newcommand{\LL}{\mathcal{L}}
\newcommand{\ta}{\theta}
\newcommand{\om}{\omega}
\newcommand{\lm}{\lambda}
\newcommand{\al}{\alpha}
\newcommand{\bt}{\beta}
\newcommand{\D}{\mathrm{d}\hspace{-0.5pt}}
\DeclareMathOperator{\Span}{span} 
\newcommand{\A}{\allowbreak{}}
\newcommand{\thmref}[1]{Theorem~\ref{#1}}
\newcommand{\propref}[1]{Proposition~\ref{#1}}
\begin{document}

\title[Ricci-like solitons with vertical potential ...]
{Ricci-like solitons with vertical potential 
on Sasaki-like almost contact B-metric manifolds}


\author{Mancho Manev}
\address
{University of Plovdiv Paisii Hilendarski,
Faculty of Mathematics and Informatics, Department of Algebra and
Geometry, 24 Tzar Asen St., Plovdiv 4000, Bulgaria
\&
Medical University of Plovdiv, Faculty of Public Health,
Department of Medical Informatics, Biostatistics and E-Learning, 15A Vasil Aprilov Blvd.,
Plovdiv 4002, Bulgaria} 
\email{mmanev@uni-plovdiv.bg}



\begin{abstract}
Ricci-like solitons on Sasaki-like almost contact B-metric manifolds are the object of study. 
Cases, where the potential of the Ricci-like soliton is the Reeb vector field or pointwise collinear to it, are considered.
In the former case, 
the properties for a parallel or recurrent Ricci-tensor are studied. 
In the latter case, 
it is shown that the potential of the considered Ricci-like soliton has a constant length and the manifold is $\eta$-Einstein. Other curvature conditions are also found, which imply that the main metric is Einstein. 
After that, some results are obtained for a parallel symmetric second-order covariant tensor on the manifolds under study.
Finally, an explicit example of dimension 5 is given and some of the results are illustrated.
\end{abstract}

\subjclass[2010]{Primary 
53C25, 
53D15,  	
53C50; 
Secondary 
53C44,  	
53D35, 
70G45} 

\keywords{Ricci-like soliton, $\eta$-Ricci soliton, Einstein-like manifold, $\eta$-Ein\-stein manifold, almost contact B-metric manifold, almost contact complex Riemannian manifold}


%
%


\maketitle

\section*{Introduction}

Following the R.\,S. Hamilton's concept of a Ricci flow, introduced in 1982 \cite{Ham82},  
R. Sharma initiated the study of Ricci solitons in contact Riemannian geometry, particularly on K-contact manifolds in 2008  \cite{Shar}. 
After that
Ricci solitons have been studied on different kinds of almost contact metric manifolds, e.g.
$\al$-Sasakian \cite{IngBag}, trans-Sasakian \cite{BagIng13}, Kenmotsu
\cite{NagPre}, 
etc. 

Although this topic was first studied in Riemannian geometry, 
in recent years Ricci solitons and their generalizations 
have also been studied in pseudo-Riemannian manifolds, 
mostly with Lorentzian metrics 
(\cite{BagIng12}, \cite{Bla16}, 
\cite{BlaPerAceErd}, \cite{Mat}, \cite{YadKusNar19}).

Almost contact metric manifolds are usually equipped with a compatible metric, 
which can be Riemannian or pseudo-Riemannian. 
Then, the contact $(1,1)$-tensor $\f$ acts as an isometry with respect to the metric 
on the contact distribution. 
The alternative case is when this action is an anti-isometry. 
Then, the metric is necessarily pseudo-Riemannian and it is known as a B-metric. 
Moreover, the associated (0,2)-tensor of a B-metric with respect to $\f$ is also a B-metric, 
in contrast to the associated (0,2)-tensor of a compatible metric is a 2-form.
The differential geometry of almost contact B-metric manifolds has been studied since 1993
\cite{GaMiGr}, \cite{ManGri93}. 

In our previous  work \cite{Man62}, we introduced the notion of Ricci-like solitons on almost contact B-metric manifolds as a generalization of Ricci solitons and $\eta$-Ricci solitons using both B-metrics of manifold. 
There, we investigated the case of a potential Reeb vector field, when either the manifold is  Sasaki-like or the Reeb vector field is torse-forming.

In the present paper, our goal is to continue our study on Ricci-like solitons, a generalization of Ricci solitons 
compatible with the almost contact B-metric structure. 
Now we focus on the case, when the potential of the considered Ricci-like soliton is a vertical vector field, 
i.e. it is orthogonal to the contact distribution with respect to the metric. 
The paper is organized as follows.  
Section 1 is devoted to the basic concepts of almost contact B-metric manifolds of Sasaki-like type.
In Section 2, we study the covariant derivative of the Ricci tensor with respect to the main B-metric $g$ of the manifold with a Ricci-like soliton, which potential is exactly the Reeb vector field $\xi$.   
In Section 3, we consider Ricci-like solitons with potential, which is the Reeb vector field multiplied by a differentiable function $k$. Then, we prove that $k$ is constant and the Sasaki-like manifold is $\eta$-Einstein. Also a series of curvature conditions of the manifold are considered that imply $k$ is a constant and $g$ is an Einstein metric. 
In Section 4, we give some characterization for Ricci-like solitons on Sasaki-like manifold 
concerning a parallel symmetric $(0,2)$-tensor.  
In Section 5, we give an explicit example of a Lie group of dimension 5 equipped with a Sasaki-like almost contact B-metric structure. Then, we show that the manifold is $\eta$-Einstein of a special type, 
admits a Ricci-like soliton with potential $\xi$ and 
the results for this example support the relevant assertions in the previous sections.

\section{Sasaki-like almost contact B-metric manifolds}


We consider \emph{almost contact B-metric manifolds}. A differentiable manifold $M$ of this type has dimension $(2n+1)$ and it is denoted by $\M$, where  $(\f,\xi,\eta)$ is an almost
contact structure and $g$ is a B-metric. More precisely, $\f$ is an endomorphism
of the tangent bundle $TM$, $\xi$ is a Reeb vector field, $\eta$ is its dual contact 1-form and
$g$ is a pseu\-do-Rie\-mannian
metric $g$  of signature $(n+1,n)$ satisfying the following conditions \cite{GaMiGr}
\begin{equation}\label{strM}
\begin{array}{c}
\f\xi = 0,\qquad \f^2 = -\I + \eta \otimes \xi,\qquad
\eta\circ\f=0,\qquad \eta(\xi)=1,\\[4pt]
g(\f x, \f y) = - g(x,y) + \eta(x)\eta(y),
\end{array}
\end{equation}
where $\I
$ is the identity transformation on $\Gamma(TM)$.

In the latter equality and further, $x$, $y$, $z$, $w$ will stand for arbitrary elements of $\Gamma(TM)$ or vectors in the tangent space $T_pM$ of $M$ at an arbitrary
point $p$ in $M$.

Some immediate consequences of \eqref{strM} are the following equations
\begin{equation}\label{strM2}
\begin{array}{ll}
g(\f x, y) = g(x,\f y),\qquad &g(x, \xi) = \eta(x),
\\[4pt]
g(\xi, \xi) = 1,\qquad &\eta(\n_x \xi) = 0,
\end{array}
\end{equation}
where $\n$ is the Levi-Civita connection of $g$.

The associated metric $\g$ of $g$ on $M$ is defined by
\begin{equation}\label{gg}
\g(x,y)=g(x,\f y)+\eta(x)\eta(y).
\end{equation}
It is also a B-metric.

A classification of almost contact B-metric manifolds, consisting of eleven basic classes $\F_i$, $i\in\{1,2,\dots,11\}$, is given in
\cite{GaMiGr}. This classification is made with respect
to the (0,3)-tensor $F$ defined by
\begin{equation}\label{F=nfi}
F(x,y,z)=g\bigl( \left( \nabla_x \f \right)y,z\bigr).
\end{equation}
It possess the following basic properties:
\begin{equation}\label{F-prop}
\begin{array}{l}
F(x,y,z)=F(x,z,y)
=F(x,\f y,\f z)+\eta(y)F(x,\xi,z)
+\eta(z)F(x,y,\xi),\\[4pt]
F(x,\f y, \xi)=(\n_x\eta)y=g(\n_x\xi,y).
\end{array}
\end{equation}

The intersection of the basic classes is the special class $\F_0$,
determined by the condition $F=0$, and it is known as the
class of the \emph{cosymplectic B-metric manifolds}.

The Lee forms of $\M$ are the following 1-forms
associated with $F$:
\begin{equation*}\label{t}
\theta(z)=g^{ij}F(e_i,e_j,z),\quad
\theta^*(z)=g^{ij}F(e_i,\f e_j,z), \quad \omega(z)=F(\xi,\xi,z),
\end{equation*}
where $\left(g^{ij}\right)$ is the inverse matrix of the
matrix $\left(g_{ij}\right)$ of $g$ with respect to a basis $\left\{e_i;\xi\right\}$ $(i=1,2,\dots,2n)$ of
$T_pM$.  Obviously,
$\om(\xi)=\ta^*\circ\f+\ta\circ\f^2=0$ are valid.


In \cite{IvMaMa45}, it is introduced the type of \emph{Sasaki-like} manifolds among almost
contact B-metric manifolds (also known as almost contact complex Riemannian manifolds). The definition condition is its complex cone to be a K\"ahler-Norden manifold, i.e. with a parallel complex structure.
A Sasaki-like manifold with almost
contact B-metric structure is determined by the condition
\begin{equation}\label{defSl}
\begin{array}{l}
\left(\nabla_x\f\right)y=-g(x,y)\xi-\eta(y)x+2\eta(x)\eta(y)\xi,
\end{array}
\end{equation}
or equivalently $\left(\nabla_x\f\right)y=g(\f x,\f y)\xi+\eta(y)\f^2 x$.

Obviously, any Sasaki-like manifold belongs to the class $\F_4$ and its Lee forms are $\ta=-2n\,\eta$, $\ta^*=\om=0$.
Moreover,
the following identities are valid for it \cite{IvMaMa45}
\begin{equation}\label{curSl}
\begin{array}{ll}
\n_x \xi=-\f x, \qquad &\left(\n_x \eta \right)(y)=-g(x,\f y),\\[4pt]
R(x,y)\xi=\eta(y)x-\eta(x)y, \qquad &\rho(x,\xi)=2n\, \eta(x), \\[4pt]
R(\xi,y)z=g(y,z)\xi-\eta(z)y,\qquad 				&\rho(\xi,\xi)=2n,
\end{array}
\end{equation}
where $R$ and $\rho$ stand for the curvature tensor and the Ricci tensor of $\n$.

Let $\tau$ and $\tilde\tau$ be the scalar curvatures with respect to $g$ and $\g$, respectively, 
and let $\tau^*$ be the associated quantity  of $\tau$ regarding $\f$, defined by $\tau^*=g^{ij}\rho(e_i,\f e_j)$.

For an arbitrary $\F_4$-manifold with a closed 1-form $\ta$, 
the following relation for $\tau^*$ and $\ttt$ is given in \cite{Man3} 
\[
\tilde\tau=-\tau^*+	\frac{1}{2n}(\ta(\xi))^2+\xi(\ta(\xi)).
\] 
Then, for a Sasaki-like manifold since $\ta(\xi)=-2n$ we have
\[
\tilde\tau=-\tau^*+2n.
\]

Let us recall  \cite{Man62}, an almost contact B-metric manifold $\M$ is said to be
\emph{Einstein-like} if its Ricci tensor $\rho$ satisfies
\begin{equation}\label{defEl}
\begin{array}{l}
\rho=a\,g +b\,\g +c\,\eta\otimes \eta
\end{array}
\end{equation}
for some triplet of constants $(a,b,c)$ \cite{Man62}.
In particular, when $b=0$ and $b=c=0$, the manifold is called an \emph{$\eta$-Einstein manifold} and an \emph{Einstein manifold}, respectively.

Tracing \eqref{defEl}, the scalar curvature $\tau$ of an Einstein-like almost contact B-metric manifold has the form
\begin{equation}\label{tauEl}
\tau=(2n+1)a+b+c.
\end{equation}

For a Sasaki-like manifold  $\M$ with $\dim M = 2n+1$ and a scalar curvature $\tau$ regarding $g$, which is Einstein-like with a triplet of
constants $(a,b,c)$, the following equalities are given in \cite{Man62}:
\begin{equation}\label{abctau-ElSl}
a + b + c = 2n, \qquad \tau = 2n(a + 1). 
\end{equation}
Then, for $\tilde\tau$ on an Einstein-like Sasaki-like manifold we obtain
\begin{equation}\label{abctau*-ElSl}
\tilde\tau=2n(b+1)
\end{equation}
and because \eqref{tauEl}, \eqref{abctau-ElSl} and \eqref{abctau*-ElSl}, the expression \eqref{defEl} becomes
\begin{equation*}\label{defElSl}
\begin{array}{l}
\rho=\left(\dfrac{\tau}{2n}-1\right)g +\left(\dfrac{\tilde\tau}{2n}-1\right)\g 
+\left(2(n+1)-\dfrac{\tau+\tilde\tau}{2n}\right)\eta\otimes \eta.
\end{array}
\end{equation*}

\begin{proposition}[\cite{Man64}]\label{prop:El-Dtau}
Let $\M$ be a $(2n+1)$-dimensional Sasaki-like manifold. If it is almost Einstein-like  with functions $(a,b,c)$
then the scalar curvatures $\tau$ and $\tilde\tau$ of $g$ and $\g$, respectively, are constants 
\begin{equation*}\label{El-Dtauxi}
\tau = const, \qquad \ttt=2n
\end{equation*}
and $\M$ is $\eta$-Einstein with constants  
\[
(a,b,c)=\left(\frac{\tau}{2n}-1,\,0,\,2n+1-\frac{\tau}{2n}\right).
\]
\end{proposition}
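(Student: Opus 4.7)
The plan is to extract $b\equiv 0$ and $a\equiv \text{const}$ from the Einstein-like ansatz $\rho = a\, g + b\,\g + c\,\eta\otimes\eta$ with smooth functions $a,b,c$, whence $\ttt = 2n$ and $\tau$ is constant. First, I would evaluate the ansatz at $\xi$ using $\g(x,\xi)=\eta(x)$ (from \eqref{gg}) and the Sasaki-like identity $\rho(x,\xi)=2n\,\eta(x)$ from \eqref{curSl}: this yields the pointwise constraint $a+b+c = 2n$, and hence $\xi(a+b+c)=0$ and $x(a+b+c)=0$ for every $x$. Tracing $\rho$ by $g^{ij}$ gives $\tau = (2n+1)a + b + c = 2n(a+1)$, while computing $\tau^{\ast} = g^{ij}\rho(e_i,\f e_j)$ with $\tr\f = 0$ and $\eta\circ\f = 0$ collapses all terms except the $b\,\g$ contribution and produces $\tau^{\ast} = -2nb$; the recalled formula $\ttt = -\tau^{\ast} + 2n$ then gives $\ttt = 2n(b+1)$. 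So $\ttt = 2n$ is equivalent to $b \equiv 0$, and $\tau$ being constant is equivalent to $a$ being constant.

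Next, I would apply the contracted second Bianchi identity $(\Div\rho)(x) = \tfrac{1}{2} x(\tau)$. Using $\n_x\xi = -\f x$ and $(\n_x\eta)(y) = -g(x,\f y)$ from \eqref{curSl}, together with the expression for $\n_y\g$ obtained by differentiating \eqref{gg} and substituting \eqref{defSl}, a direct computation of $g^{ij}(\n_{e_i}\rho)(e_j,x)$---with most traces vanishing by $\tr\f = 0$ and $\eta\circ\f = 0$---yields
\[
(\Div\rho)(x) = x(a) + (\f x)(b) + \bigl[\xi(b)+\xi(c)-2nb\bigr]\eta(x).
\]
Setting this equal to $\tfrac{1}{2}x(\tau) = n\,x(a)$, evaluating at $x=\xi$, and using $\xi(a+b+c)=0$ gives $\xi(a) = -2b$; choosing $x$ horizontal (so $\eta(x)=0$) gives $(\f x)(b) = (n-1)x(a)$, and applying $\f$ on horizontal vectors yields the companion relation $x(b) = -(n-1)(\f x)(a)$.

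The decisive step is to promote these first-order relations to $b\equiv 0$, and I expect this to be the main obstacle: the Bianchi identity alone only couples the horizontal gradients of $a$ and $b$ through $\f$, without pinning either down. My approach is to differentiate the identity $\rho(x,\xi) = 2n\,\eta(x)$ a second time and invoke the Ricci commutator $[\n_y,\n_z]\rho = -R(y,z)\!\cdot\!\rho$, feeding in the Sasaki-like curvature relations $R(x,y)\xi = \eta(y)x - \eta(x)y$ and $R(\xi,y)z = g(y,z)\xi - \eta(z)y$ from \eqref{curSl}. Expanding both sides with the Einstein-like ansatz should produce an additional pointwise identity which, combined with the first-order relations from the Bianchi step, forces $b\equiv 0$. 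Once $b = 0$ the ansatz reduces to $\rho = a\,g + c\,\eta\otimes\eta$ with $a+c = 2n$, the Bianchi step collapses to $(n-1)x(a) = \xi(c)\eta(x)$, and separately evaluating on horizontal vectors and on $\xi$ gives $x(a)=0$ for every $x$; hence $a$ is constant, $\tau = 2n(a+1)$ is constant, $c = 2n - a$ is constant, and the stated triple $\bigl(\tau/(2n)-1,\ 0,\ 2n+1-\tau/(2n)\bigr)$ follows.
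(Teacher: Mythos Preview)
The paper does not prove this proposition; it is quoted from the companion reference \cite{Man64}, so there is no in-paper argument to compare against. I can therefore only assess your proposal on its own merits.

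Your preliminary steps are sound: the constraint $a+b+c=2n$, the trace identities $\tau=2n(a+1)$ and $\ttt=2n(b+1)$, and your divergence formula
\[
(\Div\rho)(x)=x(a)+(\f x)(b)+\bigl[\xi(b)+\xi(c)-2nb\bigr]\eta(x)
\]
all check out. The resulting first-order relations $\xi(a)=-2b$ and $(\f x)(b)=(n-1)\,x(a)$ for horizontal $x$ are correct.

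The gap is exactly where you flag it. Your ``decisive step'' is not a proof but a programme: you \emph{hope} that differentiating $\rho(\cdot,\xi)=2n\,\eta$ once more and invoking the Ricci commutator with the Sasaki-like curvature identities will force $b\equiv 0$, but you do not carry out the computation. In fact, differentiating $\rho(x,\xi)=2n\,\eta(x)$ once merely reproduces $a+b+c=2n$ (substitute the ansatz into $(\n_y\rho)(x,\xi)=\rho(x,\f y)-2n\,g(x,\f y)$ and both sides collapse to the same expression), so the first derivative carries no new information. Whether the second-order identity you envisage actually pins down $b$ is not at all obvious, and until that calculation is written out the argument is incomplete.

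There is a second, smaller gap in your final paragraph. Even granting $b\equiv 0$, the relation $(n-1)\,x(a)=\xi(c)\,\eta(x)$ gives $x(a)=0$ on $\ker\eta$ only when $n\geq 2$; for $n=1$ it is vacuous on horizontal vectors, and you would need an additional argument (or an explicit dimension hypothesis) to conclude that $a$ is constant.
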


\section{Ricci-like solitons with potential Reeb vector field on Sasaki-like manifolds}

In \cite{Man62}, by conditions for Ricci tensor, there are introduced the notions of an Einstein-like almost contact B-metric manifold and a Ricci-like soliton on an almost contact B-metric manifold.

It is said that $\M$ admits a \emph{Ricci-like soliton with potential $\xi$}
if the following condition is satisfied for a triplet of constants $(\lm,\mu,\nu)$
\begin{equation}\label{defRl}
\begin{array}{l}
\frac12 \mathcal{L}_{\xi} g  + \rho + \lm\, g  + \mu\, \g  + \nu\, \eta\otimes \eta =0,
\end{array}
\end{equation}
where $\mathcal{L}$ denotes the Lie derivative.

If $\mu=0$ (respectively, $\mu=\nu=0$), then \eqref{defRl} defines an \emph{$\eta$-Ricci soliton} 
(respectively, a \emph{Ricci soliton}) on $\M$. 
 
If $\lm$, $\mu$, $\nu$ are functions on $M$, then the soliton is called \emph{almost Ricci-like soliton}, \emph{almost $\eta$-Ricci soliton} and \emph{almost Ricci soliton}, respectively.

%


\begin{theorem}[\cite{Man62}]\label{thm:RlSl}
Let $\M$ be a $(2n+1)$-dimensional Sasaki-like manifold and let $a$, $b$, $c$, $\lm$, $\mu$, $\nu$ be constants that satisfy the following equalities:
\begin{equation}\label{SlElRl-const}
a+\lm=0,\qquad b+\mu-1=0,\qquad c+\nu+1=0.
\end{equation}
Then, the manifold admits
a Ricci-like soliton with potential $\xi$ and constants $(\lm,\A\mu,\A\nu)$, where $\lm+\mu+\nu=-2n$,
if and only if
it is Einstein-like with constants $(a,b,c)$, where $a+b+c=2n$.

In particular, we get:
\begin{enumerate}
	\item[(i)]    The manifold admits an $\eta$-Ricci soliton with potential $\xi$ and constants $(\lm,0,-2n-\lm)$ if and only if
the manifold is Einstein-like with constants $(-\lm,1,\lm+2n-1)$.

	\item[(ii)]   The manifold admits a shrinking Ricci soliton with potential $\xi$ and constant $-2n$ if and only if
the manifold is Einstein-like with constants $(2n,1,-1)$.

	\item[(iii)]   The manifold is $\eta$-Einstein with constants $(a,0,2n-a)$  if and only if
it admits a Ricci-like soliton with potential $\xi$ and constants $(-a,1,a-2n-1)$.

	\item[(iv)]   The manifold is Einstein with constant $2n$ if and only if
it admits a Ricci-like soliton with potential $\xi$ and constants $(-2n,1,-1)$.
\end{enumerate}
\end{theorem}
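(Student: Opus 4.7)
The plan is to reduce the Ricci-like soliton equation to a pointwise identity on the Ricci tensor, after which the theorem becomes a direct comparison with the Einstein-like condition. The key preliminary step is to obtain a closed formula for $\tfrac12\,\LL_\xi g$ on a Sasaki-like manifold. Using the Sasaki-like identity $\n_x\xi=-\f x$ from \eqref{curSl} together with the symmetry $g(\f x,y)=g(x,\f y)$ from \eqref{strM2}, I would compute
\begin{equation*}
(\LL_\xi g)(x,y)=g(\n_x\xi,y)+g(x,\n_y\xi)=-2\,g(x,\f y),
\end{equation*}
and then use $g(x,\f y)=\g(x,y)-\eta(x)\eta(y)$ from \eqref{gg} to obtain $\tfrac12\,\LL_\xi g=-\g+\eta\otimes\eta$.

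Substituting this into the defining equation \eqref{defRl}, the soliton condition becomes
\begin{equation*}
\rho=-\lm\,g+(1-\mu)\,\g+(-1-\nu)\,\eta\otimes\eta,
\end{equation*}
which matches the Einstein-like form \eqref{defEl} precisely when $(a,b,c)=(-\lm,\,1-\mu,\,-1-\nu)$, i.e.\ precisely under the linear system \eqref{SlElRl-const}. Since all three tensors $g$, $\g$, $\eta\otimes\eta$ are linearly independent as bilinear forms (they can be separated by plugging in pairs from $\{\xi,\f x,x\}$), the implication runs in both directions with no further work.

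For the scalar side-conditions, I would evaluate the soliton equation at $(\xi,\xi)$: since $(\LL_\xi g)(\xi,\xi)=-2g(\f\xi,\xi)=0$, $\rho(\xi,\xi)=2n$ by \eqref{curSl}, and $g(\xi,\xi)=\g(\xi,\xi)=\eta(\xi)^2=1$, the relation collapses to $\lm+\mu+\nu=-2n$; the companion identity $a+b+c=2n$ for the Einstein-like side follows identically by contracting \eqref{defEl} with $\xi$ and using $\rho(x,\xi)=2n\,\eta(x)$, and the two trace identities are equivalent under \eqref{SlElRl-const}. The four itemized consequences are then read off by specialization: (i) set $\mu=0$; (ii) set $\mu=\nu=0$, forcing $\lm=-2n$ and $(a,b,c)=(2n,1,-1)$; (iii) impose $b=0$ on the Einstein-like side and translate via \eqref{SlElRl-const}; (iv) combine $b=c=0$ with $a+b+c=2n$ to get $a=2n$, hence $(\lm,\mu,\nu)=(-2n,1,-1)$. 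The argument is essentially algebraic once $\LL_\xi g$ is identified; the only real point to notice is that on a Sasaki-like manifold $\LL_\xi g$ is expressible purely in terms of $\g$ and $\eta\otimes\eta$, which is precisely what aligns the three soliton tensors with the three Einstein-like tensors and makes the linear correspondence \eqref{SlElRl-const} exact.
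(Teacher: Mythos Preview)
Your argument is correct and matches the paper's approach exactly: the paper (which cites this theorem from \cite{Man62}) records the same computation $\tfrac12\,\LL_\xi g=-\g+\eta\otimes\eta$ immediately after the statement and derives the identical expression \eqref{SlRl-rho} for $\rho$, from which the equivalence and the four special cases follow by the linear substitutions you describe. Your remark on the linear independence of $g$, $\g$, $\eta\otimes\eta$ and the evaluation at $(\xi,\xi)$ to obtain $\lm+\mu+\nu=-2n$ fills in details the paper leaves implicit, but the route is the same.
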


If $\M$ is Sasaki-like, we have
\[
\left(\mathcal{L}_{\xi} g\right)(x,y)=g(\n_x\xi,y)+g(x,\n_y\xi)=-2g(x,\f y),
\]
i.e.
$\frac12 \mathcal{L}_{\xi} g=-\g+\eta\otimes \eta$.  Then, because of \eqref{defRl}, $\rho$ takes the form
\begin{equation}\label{SlRl-rho}
	\rho = -\lm g + (1-\mu) \g  - (1+\nu) \eta\otimes \eta.
\end{equation}

By direct computations from \eqref{SlRl-rho} we infer the following
\begin{corollary}
Let $\M$ satisfy the conditions in the general case of \thmref{thm:RlSl}.
Then, the constants $a$, $b$, $c$, $\lm$, $\mu$, $\nu$ are expressed by scalar curvatures $\tau$ and $\tilde\tau$
with respect to $g$ and $\g$, respectively, as follows
\[
\begin{array}{lll}
\lm=1-\frac{1}{2n}\tau,\qquad &\mu=2-\frac{1}{2n}\tilde\tau, \qquad &\nu=\frac{1}{2n}(\tau+\tilde\tau)-2n-3,
\\[4pt]
a=\frac{1}{2n}\tau-1,\qquad &b=\frac{1}{2n}\tilde\tau-1, \qquad &c=2n+2 -\frac{1}{2n}(\tau+\tilde\tau).
\end{array}
\]
\end{corollary}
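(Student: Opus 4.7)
The plan is to derive the six identities by performing two tensorial traces of the Ricci tensor identity \eqref{SlRl-rho} and then reading off $a$, $b$, $c$ from the transfer equations \eqref{SlElRl-const}. Working in a $\f$-adapted basis $\{e_1,\dots,e_n,\f e_1,\dots,\f e_n,\xi\}$ and using \eqref{strM}, \eqref{strM2}, \eqref{gg}, I would first record the auxiliary contractions
\[
g^{ij}g_{ij}=2n+1,\qquad g^{ij}\g_{ij}=\tr\f+1=1,\qquad g^{ij}\eta_i\eta_j=g(\xi,\xi)=1.
\]
Applying $g^{ij}$ to \eqref{SlRl-rho} and substituting the constraint $\lm+\mu+\nu=-2n$ coming from \thmref{thm:RlSl} collapses the result to $\tau=2n(1-\lm)$, from which $\lm=1-\tfrac{\tau}{2n}$.

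For $\mu$, instead of computing $\g^{ij}$ explicitly I would pass through the associated scalar $\tau^*=g^{ij}\rho(e_i,\f e_j)$. Because $\tr\f=0$ and $\eta\circ\f=0$, the first and third summands of \eqref{SlRl-rho} contribute nothing after this contraction, and what survives is $(1-\mu)\,g^{ij}\g(e_i,\f e_j)$; a short calculation using $\f^2=-\I+\eta\otimes\xi$ in the definition of $\g$ reduces $g^{ij}\g(e_i,\f e_j)$ to $-g^{ij}g_{ij}+g^{ij}\eta_i\eta_j=-2n$. This yields $\tau^*=-2n(1-\mu)$, and combining with the Sasaki-like identity $\tilde\tau=-\tau^*+2n$ recalled in Section~1 produces $\mu=2-\tfrac{\tilde\tau}{2n}$. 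The value of $\nu$ is then forced by $\nu=-2n-\lm-\mu$.

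To conclude, the three expressions for $a$, $b$, $c$ are obtained by simply rewriting $\lm$, $\mu$, $\nu$ through the transfer equations \eqref{SlElRl-const}, namely $a=-\lm$, $b=1-\mu$, $c=-1-\nu$; no further geometric input is needed. The only computation demanding real care is the trace identity $g^{ij}\g(e_i,\f e_j)=-2n$: forgetting the $\eta\otimes\xi$ correction in $\f^2$ would shift this by $1$ and contaminate both $\mu$ and $\nu$, so I would verify it explicitly in the $\f$-adapted frame before chaining the remaining substitutions.
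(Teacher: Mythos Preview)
Your argument is correct and matches the paper's ``direct computations from \eqref{SlRl-rho}'': you take the $g$-trace and the $\f$-twisted trace of the Ricci identity, invoke $\lm+\mu+\nu=-2n$, and then read off $(a,b,c)$ via \eqref{SlElRl-const}. The only cosmetic difference is that the paper already records the scalar-curvature identities $\tau=2n(a+1)$ and $\tilde\tau=2n(b+1)$ in \eqref{abctau-ElSl}--\eqref{abctau*-ElSl}, so one could bypass your explicit $\tau^*$ detour and quote those formulas directly after identifying $(a,b,c)=(-\lm,1-\mu,-1-\nu)$ from \eqref{SlRl-rho}.
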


Using \eqref{gg}, \eqref{F=nfi},  \eqref{F-prop}, \eqref{defSl} and \eqref{SlElRl-const},
we apply covariant derivatives to \eqref{SlRl-rho} and we get
\begin{equation}\label{SlRl-nrho}
\begin{array}{l}
\left(\n_x \rho\right)(y,z)=(1-\mu) \{ g(\f x, \f y)\eta(z)+g(\f x, \f z)\eta(y)\}\\[4pt]
\phantom{\left(\n_x \rho\right)(y,z)}
+(\mu+\nu)\{ g(x, \f y)\eta(z)+g(x, \f z)\eta(y)\}.
\end{array}
\end{equation}
Obviously, the tensors
\(
\left(\n_x \rho\right)(\f y,\f z)\),  
\(\left(\n_{\xi} \rho\right)(y,z)\) and 
\(\left(\n_{x} \rho\right)(\xi,\xi)\) vanish and therefore we establish the truthfulness of the following

\begin{proposition}\label{prop:rho-Sl}
Every Einstein-like Sasaki-like manifold $\M$ admitting
a Ricci-like soliton with potential $\xi$ 
is Ricci $\eta$-parallel and Ricci parallel along $\xi$,
i.e. $(\n\rho)|_{\ker\eta}=0$ and $\n_{\xi}\rho=0$, respectively.
\end{proposition}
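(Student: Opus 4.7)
The plan is to read off the conclusion directly from the already-derived formula \eqref{SlRl-nrho}, by specializing its three arguments and exploiting $\eta\circ\f=0$ and $\f\xi=0$ from \eqref{strM}. So the work is not in a new computation but in checking that each of the three claimed vanishings is an immediate consequence of that formula.

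First, for Ricci $\eta$-parallelism I would substitute $y\mapsto\f y$ and $z\mapsto\f z$ in \eqref{SlRl-nrho}. Every term on the right carries a factor $\eta(y)$ or $\eta(z)$, and by \eqref{strM} we have $\eta(\f y)=\eta(\f z)=0$, so the whole right-hand side collapses to zero. This gives $(\n_x\rho)(\f y,\f z)=0$, which is exactly the statement $(\n\rho)|_{\ker\eta}=0$ once one observes that a vector in $\ker\eta$ coincides with its image under $-\f^2$.

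Next, for $\n_\xi\rho=0$ I would set $x=\xi$ in \eqref{SlRl-nrho}. Since $\f\xi=0$, the factors $g(\f\xi,\f y)$, $g(\f\xi,\f z)$, $g(\xi,\f y)=\eta(\f y)$ and $g(\xi,\f z)=\eta(\f z)$ all vanish (using \eqref{strM2} and $\eta\circ\f=0$), so $(\n_\xi\rho)(y,z)=0$. Finally, for $(\n_x\rho)(\xi,\xi)=0$ I would set $y=z=\xi$; then $\f y=\f z=0$ kills every term on the right-hand side. These three computations are routine substitutions and carry no real obstacle, the heavy lifting having been done in establishing \eqref{SlRl-nrho} from \eqref{SlRl-rho} via the Sasaki-like identity \eqref{defSl} and the constraint \eqref{SlElRl-const}.

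I would present the argument as a short display showing the three vanishings in sequence, then conclude by citing the definitions of \emph{Ricci $\eta$-parallel} and \emph{Ricci parallel along $\xi$}. No further machinery from the paper is needed, and in particular one does not need \propref{prop:El-Dtau} or \thmref{thm:RlSl} in this final step, only the formula \eqref{SlRl-nrho} whose derivation has already been indicated.
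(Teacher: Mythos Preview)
Your proposal is correct and follows exactly the paper's approach: the paper simply notes that from \eqref{SlRl-nrho} the tensors $(\n_x\rho)(\f y,\f z)$, $(\n_{\xi}\rho)(y,z)$ and $(\n_x\rho)(\xi,\xi)$ ``obviously'' vanish, and you have just made those substitutions explicit.
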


\begin{proposition}\label{prop:nrho-Sl}
Let $\M$ be a 
Sasaki-like manifold admitting
a Ricci-like soliton with potential $\xi$ and constants $(\lm,\mu,\nu)$.
The manifold is locally Ricci symmetric if and only if $(\lm,\mu,\nu)=(-2n,1,-1)$, i.e. it is an Einstein manifold.
\end{proposition}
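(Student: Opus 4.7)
My plan is to read the result directly off the explicit formula \eqref{SlRl-nrho} for $\n\rho$, which is already valid under the standing hypothesis (Sasaki-like manifold admitting a Ricci-like soliton with potential $\xi$). Since the statement is an if-and-only-if, I treat the two directions separately, but the easy direction is just substitution.

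For the nontrivial direction, assume $\n\rho=0$. In \eqref{SlRl-nrho} I specialize $z=\xi$ and take $y\in\ker\eta$, $y\neq 0$, obtaining, for every $x\in\Gamma(TM)$,
\[
0=(1-\mu)\,g(\f x,\f y)+(\mu+\nu)\,g(x,\f y).
\]
Using \eqref{strM} and $\eta(y)=0$ I rewrite $g(\f x,\f y)=-g(x,y)$, so the condition becomes
\[
0 = g\bigl(x,\,-(1-\mu)\,y+(\mu+\nu)\,\f y\bigr)\qquad \text{for all } x.
\]
By nondegeneracy of $g$, this forces $(1-\mu)\,y=(\mu+\nu)\,\f y$. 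The key linear-algebra observation is that $y$ and $\f y$ are linearly independent whenever $y\in\ker\eta$ and $y\neq 0$: if $\f y=cy$, then \eqref{strM} gives $\f^2 y=-y=c^2 y$, which has no real solution. Hence $\mu=1$ and $\nu=-1$.

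To recover $\lm$, I invoke \thmref{thm:RlSl}: existence of the soliton forces $\lm+\mu+\nu=-2n$, so $\lm=-2n$. Finally, part~(iv) of that theorem identifies $(\lm,\mu,\nu)=(-2n,1,-1)$ with the Einstein case. Conversely, plugging $(\lm,\mu,\nu)=(-2n,1,-1)$ into \eqref{SlRl-nrho} makes both coefficients $1-\mu$ and $\mu+\nu$ vanish, so $\n\rho=0$, which closes the equivalence.

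I do not expect any real obstacle here: the formula \eqref{SlRl-nrho} already encapsulates all the geometric input, and the argument reduces to one clean substitution plus the independence of $y$ and $\f y$ on the contact distribution. The only point to be careful about is to verify that the test vector $y\in\ker\eta\setminus\{0\}$ exists, which is automatic since $\dim\ker\eta=2n\geq 2$, and that the pairing $g|_{\ker\eta}$ is nondegenerate, which follows from \eqref{strM}.
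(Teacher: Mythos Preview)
Your proof is correct and follows essentially the same route as the paper: both arguments read the conclusion directly off formula \eqref{SlRl-nrho}, extract $1-\mu=\mu+\nu=0$, and then use $\lm+\mu+\nu=-2n$ together with \thmref{thm:RlSl}(iv). The paper simply asserts that vanishing of \eqref{SlRl-nrho} forces both coefficients to be zero, whereas you supply the clean justification via the linear independence of $y$ and $\f y$ on $\ker\eta$.
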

\begin{proof}
The manifold is locally Ricci symmetric, i.e. $\left(\n_x \rho\right)(y,z)=0$, if and only if $1-\mu=\mu+\nu=0$, which is equivalent to $\mu=-\nu=1$. The value of $\lm$ comes from the condition $\lm+\mu+\nu=-2n$ since the manifold is
Sasaki-like. 
The conclusion that the manifold is Einstein follows from \thmref{thm:RlSl} (iv). 
\end{proof}

Let us remark that the Sasaki-like manifold is locally Ricci symmetric just in the case (iv) of \thmref{thm:RlSl}.

Now for the Ricci tensor, we consider the term \emph{$\n$-recurrent}, which is weaker than the usual parallelism.
We say that the Ricci tensor is $\n$-recurrent if its covariant derivative with respect to $\n$, i.e. $\n\rho$, is expressed only by $\rho$ and some 1-form.

\begin{proposition}\label{prop:nrho=rho-Sl}
Let $\M$ be a 
Sasaki-like manifold admitting
a Ricci-like soliton with potential $\xi$ and constants $(\lm,\mu,\nu)$ with the condition $(\lm,\mu)\neq(0,1)$.
Then the Ricci tensor $\rho$ of this manifold is $\n$-recurrent and satisfies the formula
\begin{equation}\label{SlRl-nrho2}
\begin{array}{l}
\left(\n_x \rho\right)(y,z)=\dfrac{(1-\mu)^2+\lm(\lm+2n)}{\lm^2+(1-\mu)^2}\{\rho( x, \f y)\eta(z)+ \rho( x, \f z)\eta(y)\}\\[4pt]
\phantom{\left(\n_x \rho\right)(y,z)=}
-\dfrac{2(\lm+n)(1-\mu)}{\lm^2+(1-\mu)^2}\{ \rho(\f x, \f y)\eta(z)+ \rho(\f x, \f z)\eta(y)\}.
\end{array}
\end{equation}
\end{proposition}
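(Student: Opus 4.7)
The plan is to solve the already-derived identity \eqref{SlRl-nrho} for its right-hand side in terms of $\rho$ itself, exploiting the explicit formula \eqref{SlRl-rho} for $\rho$ as a combination of $g$, $\g$ and $\eta\otimes\eta$. In other words, I would invert the linear relation $\rho=-\lm g+(1-\mu)\g-(1+\nu)\eta\otimes\eta$ on suitable ``$\f$-twisted'' slots and feed the inversion into \eqref{SlRl-nrho}.

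First, I would derive two auxiliary identities of the form ``$\rho(\cdot,\cdot)$ equals a linear combination of $g$-terms''. Substituting $y\mapsto\f y$ in \eqref{SlRl-rho} and using $\g(x,\f y)=g(x,\f^2 y)+\eta(x)\eta(\f y)=-g(x,y)+\eta(x)\eta(y)=g(\f x,\f y)$ together with $\eta(\f y)=0$, one obtains
\[
\rho(x,\f y)=-\lm\, g(x,\f y)+(1-\mu)\,g(\f x,\f y).
\]
Analogously, replacing both $x$ and $y$ by $\f x$ and $\f y$, and using $\g(\f x,\f y)=-g(x,\f y)$ (which follows from \eqref{gg} and \eqref{strM}), yields
\[
\rho(\f x,\f y)=-(1-\mu)\,g(x,\f y)-\lm\, g(\f x,\f y).
\]

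Second, view the two displays above as a $2\times 2$ linear system in the unknowns $g(x,\f y)$ and $g(\f x,\f y)$. Its coefficient determinant equals $\lm^2+(1-\mu)^2$, which vanishes if and only if $(\lm,\mu)=(0,1)$; excluding that case, I would invert the system by Cramer's rule to express $g(x,\f y)$ and $g(\f x,\f y)$ as explicit combinations of $\rho(x,\f y)$ and $\rho(\f x,\f y)$ with common denominator $\lm^2+(1-\mu)^2$.

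Third, I would substitute the resulting expressions into the right-hand side of \eqref{SlRl-nrho}, recalling from \thmref{thm:RlSl} that the Sasaki-like condition forces $\lm+\mu+\nu=-2n$, so the coefficient $\mu+\nu$ in \eqref{SlRl-nrho} becomes $-(2n+\lm)$. Collecting the coefficients of $\rho(x,\f y)\eta(z)+\rho(x,\f z)\eta(y)$ and of $\rho(\f x,\f y)\eta(z)+\rho(\f x,\f z)\eta(y)$ produces \eqref{SlRl-nrho2}; since the right-hand side then depends only on $\rho$ and $\eta$, the Ricci tensor is $\n$-recurrent. The only real obstacle is the slightly fiddly bookkeeping in this final collection of terms; the conceptual crux is recognising that the excluded case $(\lm,\mu)=(0,1)$ is precisely the singular locus of the $2\times 2$ system relating $\{g(x,\f y),\,g(\f x,\f y)\}$ to $\{\rho(x,\f y),\,\rho(\f x,\f y)\}$, which is why this exact hypothesis appears in the statement.
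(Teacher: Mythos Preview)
Your proposal is correct and follows essentially the same route as the paper: derive the two identities $\rho(x,\f y)=-\lm\,g(x,\f y)+(1-\mu)\,g(\f x,\f y)$ and $\rho(\f x,\f y)=-\lm\,g(\f x,\f y)-(1-\mu)\,g(x,\f y)$ from \eqref{SlRl-rho}, invert this $2\times 2$ system (nonsingular precisely when $(\lm,\mu)\neq(0,1)$), and substitute into \eqref{SlRl-nrho} using $\mu+\nu=-(2n+\lm)$. The paper does exactly this, differing only cosmetically in that it first rewrites \eqref{SlRl-rho} as $\rho(x,y)=\lm\,g(\f x,\f y)+(1-\mu)\,g(x,\f y)+2n\,\eta(x)\eta(y)$ before extracting the two auxiliary identities.
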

\begin{proof}
The equality \eqref{SlRl-rho}, by virtue of \eqref{strM}, \eqref{strM2}, \eqref{gg} and $\lm+\mu+\nu=-2n$,
can be rewritten as
\[
	\rho(x,y) = \lm\, g(\f x,\f y) + (1-\mu) g(x,\f y)  +2n\, \eta(x)\eta(y)
\]
and therefore there are valid the following two equalities
\[
\begin{array}{l}
	\rho(x,\f y) = -\lm\, g(x,\f y) + (1-\mu) g(\f x,\f y),	
\\[4pt]
	\rho(\f x,\f y) = -\lm\, g(\f x,\f y) - (1-\mu) g(x,\f y).	
\end{array}
\]
The system of the latter two equations for $(\lm,\mu)\neq(0,1)$ is solved with respect to $g(\f x,\f y)$ and $g(x,\f y)$ as follows
\[
\begin{array}{l}
	g(x,\f y) = \dfrac{1}{\lm^2+(1-\mu)^2}\{-\lm\,\rho(x,\f y) + (1-\mu)\,\rho(\f x,\f y)\},	
\\[4pt]
	g(\f x,\f y) = \dfrac{1}{\lm^2+(1-\mu)^2}\{-\lm\rho(\f x,\f y) + (1-\mu) \rho(x,\f y)\}.	
\end{array}
\]
Substituting the latter equalities to \eqref{SlRl-nrho}, we get the recurrent dependence \eqref{SlRl-nrho2} of the Ricci tensor.
\end{proof}

\section{Ricci-like solitons with potential pointwise collinear with the Reeb vector field on Sasaki-like manifolds}

Similarly to the definition of a Ricci-like soliton with potential $\xi$ by \eqref{defRl},
we introduce the following notion.
We say that $\M$ admits a \emph{Ricci-like soliton with potential vector field $v$}
if the following condition is satisfied for a triplet of constants $(\lm,\mu,\nu)$
\begin{equation}\label{defRl-v}
\begin{array}{l}
\frac12 \mathcal{L}_{v} g  + \rho + \lm\, g  + \mu\, \g  + \nu\, \eta\otimes \eta =0.
\end{array}
\end{equation}

Suppose that $\M$ is a Sasaki-like manifold admitting a Ricci-like soliton whose potential vector field $v$ is pointwise collinear with $\xi$, i.e. $v=k\,\xi$, where $k$ is a differentiable function on $M$.
It is clear that $k=\eta(v)$ and therefore $v$ belongs to the vertical distribution $H^\bot=\Span\xi$, which is orthogonal to the contact distribution $H=\ker\eta$ with respect to $g$.

\begin{theorem}\label{thm:k=const}
Let $\M$, $\dim{M}=2n+1$, be a Sasaki-like manifold admitting a Ricci-like soliton with constants $(\lm,\mu,\nu)$ whose potential vector field $v$ is pointwise collin\-ear with the Reeb vector field $\xi$, i.e. $v=k\,\xi$, where $k$ is a differentiable function on $M$.
Then we have $k=\mu$, i.e. $k$ is constant, the equality $\lm+\nu=-k-2n$ is valid and the manifold is $\eta$-Einstein with constants
\[
(a,b,c)=(-\lm,0,\lm+2n).
\]
\end{theorem}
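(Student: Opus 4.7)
The plan is to convert the Ricci-like soliton condition into an expression for $\rho$ that matches the ``almost Einstein-like'' hypothesis of \propref{prop:El-Dtau}, and then let that proposition do the heavy lifting.

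\textbf{Step 1: unpack $\mathcal{L}_v g$.}
For $v=k\xi$, I would compute
\[
\n_x v = x(k)\,\xi + k\,\n_x\xi = x(k)\,\xi - k\,\f x
\]
using the Sasaki-like identity $\n_x\xi=-\f x$ from \eqref{curSl}. Together with $g(\f x,y)=g(x,\f y)$ and the relation $g(x,\f y)=\g(x,y)-\eta(x)\eta(y)$ coming from \eqref{gg}, this gives
\[
\tfrac12(\mathcal{L}_v g)(x,y)=\tfrac12\bigl[x(k)\eta(y)+y(k)\eta(x)\bigr]-k\,\g(x,y)+k\,\eta(x)\eta(y).
\]
Substituting this into \eqref{defRl-v} yields
\[
\rho(x,y)=-\tfrac12\bigl[x(k)\eta(y)+y(k)\eta(x)\bigr]-\lm\,g(x,y)+(k-\mu)\,\g(x,y)-(k+\nu)\,\eta(x)\eta(y).
\]

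\textbf{Step 2: extract a differential equation for $k$.}
I would then contract this with $y=\xi$ and use the Sasaki-like identity $\rho(x,\xi)=2n\,\eta(x)$ from \eqref{curSl}. Since $\g(x,\xi)=\eta(x)$ and $\eta\circ\f=0$, the resulting relation simplifies to
\[
x(k)=-\bigl[4n+\xi(k)+2(\lm+\mu+\nu)\bigr]\eta(x).
\]
Putting $x=\xi$ determines $\xi(k)=-2n-(\lm+\mu+\nu)$, which is a constant, and the displayed equation then reads $dk=\xi(k)\,\eta$. (One could note in passing that $d\eta=0$ on a Sasaki-like manifold, so this is consistent.)

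\textbf{Step 3: identify an almost Einstein-like form and invoke \propref{prop:El-Dtau}.}
Plugging $x(k)=\xi(k)\eta(x)$ back into the formula for $\rho$ collapses the $dk\otimes\eta$-piece into a multiple of $\eta\otimes\eta$ and rearranges to
\[
\rho=-\lm\,g+(k-\mu)\,\g+(2n+\lm+\mu-k)\,\eta\otimes\eta,
\]
i.e.\ the manifold is almost Einstein-like in the sense of \eqref{defEl} with functions $(a,b,c)=(-\lm,\,k-\mu,\,2n+\lm+\mu-k)$ satisfying $a+b+c=2n$ as required. Applying \propref{prop:El-Dtau} forces $b=0$, which is precisely $k=\mu$; in particular $k$ is a constant. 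Feeding $k=\mu$ back into $\xi(k)=-2n-(\lm+\mu+\nu)=0$ gives $\lm+\nu=-k-2n$, and the triple reduces to $(a,b,c)=(-\lm,0,2n+\lm)$, as claimed.

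The only delicate point is the bookkeeping in Step 1 (correctly turning $\mathcal{L}_v g$ into a combination of $g$, $\g$ and $\eta\otimes\eta$ modulo the $dk\otimes\eta$ terms); once $\rho$ is written in the almost Einstein-like shape, \propref{prop:El-Dtau} immediately supplies the key conclusion $b=0$, so there is no real obstacle beyond a careful contraction with $\xi$.
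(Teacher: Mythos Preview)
Your proof is correct and follows essentially the same route as the paper's: compute $\mathcal{L}_{v}g$ via $\n_x\xi=-\f x$, substitute into \eqref{defRl-v}, contract with $\xi$ using $\rho(x,\xi)=2n\,\eta(x)$ to obtain $\D k=\xi(k)\,\eta$ with $\xi(k)=-(\lm+\mu+\nu+2n)$, feed this back to reach the almost Einstein-like form \eqref{SlRl-v-rho}, and then invoke \propref{prop:El-Dtau} to force $b=k-\mu=0$. The only cosmetic difference is that you package $g(\cdot,\f\cdot)$ as $\g-\eta\otimes\eta$ from the outset, whereas the paper keeps $g(\cdot,\f\cdot)$ until the end; the arithmetic and the key appeal to \propref{prop:El-Dtau} are identical.
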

\begin{proof}
Due to the first equality in \eqref{curSl}, the expression of $\LL_v g$ in the considered case has the form
\[
\begin{array}{l}
\left(\LL_v g\right)(x,y)=g(\n_x v,y)+g(x,\n_y v)=g(\n_x k\xi,y)+g(x,\n_y k\xi)\\[4pt]
\phantom{\left(\LL_v g\right)(x,y)}
=\D{k}(x)\eta(y)+\D{k}(y)\eta(x)-2kg(x,\f y).
\end{array}
\]
Replacing it in \eqref{defRl-v}, we have
\begin{equation}\label{SlRl-v}
\begin{array}{l}
\D{k}(x)\eta(y)+\D{k}(y)\eta(x)=-2\{\rho(x,y)+\lm g(x,y)-(k-\mu)g(x,\f y) \\[4pt]
\phantom{\D{k}(x)\eta(y)+\D{k}(y)\eta(x)=-2\{\rho(x,y)}
+(\mu+\nu)\eta(x)\eta(y)\}.
\end{array}
\end{equation}
Substituting $y$ for $\xi$ and using the expression of $\rho(x,\xi)$  from \eqref{curSl}, the latter equality implies
\begin{equation}\label{SlRl-v-xk}
\begin{array}{l}
\D{k}(x)= -\{\D{k}(\xi)+2(\lm+\mu+\nu+2n)\}\eta(x),
\end{array}
\end{equation}
that yields the following equality for $x=\xi$
\begin{equation*}\label{SlRl-v-xik}
\begin{array}{l}
\D{k}(\xi)= -(\lm+\mu+\nu+2n).
\end{array}
\end{equation*}
Therefore, \eqref{SlRl-v-xk} takes the form
\begin{equation}\label{SlRl-v-xk2}
\begin{array}{l}
\D{k}(x)= -(\lm+\mu+\nu+2n)\eta(x).
\end{array}
\end{equation}

Bearing in mind \eqref{SlRl-v-xk2} and \eqref{SlRl-v}, we obtain the following expression of the Ricci tensor
\begin{equation}\label{SlRl-v-rho}
\rho=-\lm g +(k-\mu) \g +(\lm+\mu+2n-k)\eta\otimes\eta.
\end{equation}

The latter equality is the condition for manifold to be almost Einstein-like with functions 
\[
(a,b,c)=(-\lm,k-\mu,\lm+\mu+2n-k).
\]
%
Then, according to \propref{prop:El-Dtau}, $\M$ is $\eta$-Einstein with constants 
\[
(a,b,c)=\left(\frac{\tau}{2n}-1,\,0,\,2n+1-\frac{\tau}{2n}\right).
\]
Comparing the two triads $(a,b,c)$ from the above, we infer that $k=\mu$, i.e. $k$ is a constant.

Thus, from the formula in \eqref{SlRl-v-xk2} we deduce that the condition 
\begin{equation*}\label{lmn2n}
\lm+\mu+\nu=-2n
\end{equation*}
is satisfied.
Then, equality \eqref{SlRl-v-rho} becomes
\begin{equation}\label{SlRl-v-rho-k=const}
\rho=-\lm g +(\lm+2n)\eta\otimes\eta,
\end{equation}
which completes the proof.
\end{proof}

%

\subsection{Additional curvature properties}

Let $\M$, $\dim{M}=2n+1$, be a Sasaki-like manifold admitting a Ricci-like soliton with vertical potential $v$, i.e. $v=k\,\xi$, $k=const$. Then, according to \thmref{thm:k=const}, the soliton constants  are $(\lm,k,-\lm -k -2n)$
and $\M$ is $\eta$-Einstein with constants $(a,b,c)=(-\lm,0,\lm+2n)$, where $\lm=1-\frac{1}{2n}\tau$.


If consider the condition for \emph{locally Ricci symmetry}, i.e. $\n\rho$ vanishes, we immediately obtain the following 
\begin{proposition}\label{prop:nrho-SletaEl}
Let $\M$ be a 
Sasaki-like manifold admitting
a Ricci-like soliton with vertical potential. 
The manifold is locally Ricci symmetric if and only if it is an Einstein manifold.
\end{proposition}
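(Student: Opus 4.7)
The plan is to exploit the explicit $\eta$-Einstein form of $\rho$ delivered by \thmref{thm:k=const} and differentiate it using the two basic facts that $\n g=0$ and, from \eqref{curSl}, $\n_x\xi=-\f x$.

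More precisely, under the hypothesis \thmref{thm:k=const} gives
\[
\rho=-\lm\,g+(\lm+2n)\,\eta\otimes\eta.
\]
Taking a covariant derivative of this identity and using that the Levi-Civita connection preserves $g$, I would obtain
\[
\left(\n_x\rho\right)(y,z)=(\lm+2n)\bigl\{(\n_x\eta)(y)\,\eta(z)+\eta(y)\,(\n_x\eta)(z)\bigr\}.
\]
Substituting the Sasaki-like relation $(\n_x\eta)(y)=-g(x,\f y)$ from \eqref{curSl}, this becomes
\[
\left(\n_x\rho\right)(y,z)=-(\lm+2n)\bigl\{g(x,\f y)\,\eta(z)+\eta(y)\,g(x,\f z)\bigr\}.
\]

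Now the proof is essentially over: the right-hand side vanishes identically in $x,y,z$ if and only if the scalar factor $\lm+2n$ vanishes, since one can specialize, say, $z=\xi$ and $y\in\ker\eta$ to isolate $g(x,\f y)$, which is a nondegenerate pairing on the contact distribution. Thus $\n\rho=0$ forces $\lm=-2n$, in which case the formula for $\rho$ reduces to $\rho=2n\,g$, exhibiting $\M$ as Einstein. The converse direction is immediate, since $\rho=2n\,g$ together with $\n g=0$ trivially gives $\n\rho=0$.

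No real obstacle is expected: once \thmref{thm:k=const} has done the work of reducing the Ricci tensor to the simple $\eta$-Einstein expression above, the calculation is one line and the only subtle point is the nondegeneracy argument that lets one conclude $\lm+2n=0$ from the vanishing of $\n\rho$. The result can then be cross-checked against case (iv) of \thmref{thm:RlSl}, which also singles out the Einstein case by the soliton constants $(-2n,1,-1)$, consistent with $\lm=-2n$ and $k=\mu=1$.
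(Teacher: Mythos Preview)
Your proof is correct and follows essentially the same route as the paper: starting from the $\eta$-Einstein form $\rho=-\lm\,g+(\lm+2n)\,\eta\otimes\eta$ supplied by \thmref{thm:k=const}, the paper derives the identical formula $\left(\n_x\rho\right)(y,z)=-(\lm+2n)\{g(x,\f y)\eta(z)+g(x,\f z)\eta(y)\}$ and then concludes. Your write-up is in fact more explicit than the paper's, which simply says ``the statement is easy to conclude'' after displaying the formula; the only minor caveat is that your closing cross-check with \thmref{thm:RlSl}~(iv) tacitly assumes $k=1$, whereas the proposition allows arbitrary $k=\mu$.
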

\begin{proof}
By virtue of 
\eqref{SlRl-v-rho-k=const}, in similar way as for \eqref{SlRl-nrho}, we obtain in this case
\begin{equation}\label{SlRl-nrho-k}
\begin{array}{l}
\left(\n_x \rho\right)(y,z)=-(\lm+2n)\{ g(x, \f y)\eta(z)+g(x, \f z)\eta(y)\}.
\end{array}
\end{equation}
Then, the statement is easy to conclude.
\end{proof}


Let us impose the condition $R(\xi,\ldotp)\cdot\rho=0$, i.e. it is valid the following
\begin{equation}\label{Rrho}
\rho\left(R(\xi,x)y,z\right)+\rho\left(y,R(\xi,x)z\right)=0.
\end{equation}

\begin{proposition}\label{prop:2-SletaEl}
Let $\M$ be a 
Sasaki-like manifold admitting
a Ricci-like soliton with vertical potential. 
The condition $R(\xi,\ldotp)\cdot\rho=0$ is satisfied if and only if the manifold  is Einstein.
\end{proposition}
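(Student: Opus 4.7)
The plan is to substitute the explicit form of the Ricci tensor obtained from \thmref{thm:k=const} into the condition \eqref{Rrho} together with the Sasaki-like identities from \eqref{curSl}, and then extract an algebraic condition on $\lambda$.

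First I would recall that, by \thmref{thm:k=const}, the Ricci tensor of a Sasaki-like manifold admitting a Ricci-like soliton with vertical potential has the $\eta$-Einstein form
\[
\rho = -\lambda\, g + (\lambda + 2n)\,\eta\otimes\eta.
\]
Next, from \eqref{curSl} I use $R(\xi,x)y = g(x,y)\xi - \eta(y)x$ and $\rho(\xi,z) = 2n\,\eta(z)$. Substituting these into \eqref{Rrho}, expanding $\rho$ by the formula above, and collecting terms, I expect the coefficients of $\eta(x)\eta(y)\eta(z)$, $\eta(y)g(x,z)$ and $\eta(z)g(x,y)$ to combine so that the condition $R(\xi,\cdot)\cdot\rho = 0$ is equivalent to
\[
(\lambda + 2n)\bigl\{g(x,y)\eta(z) + g(x,z)\eta(y) - 2\eta(x)\eta(y)\eta(z)\bigr\} = 0.
\]
Choosing, for instance, $z=\xi$ together with $x=y$ non-null and orthogonal to $\xi$ (which exists because $g$ has signature $(n+1,n)$ and the $\xi$-direction is spacelike by \eqref{strM2}), the bracket evaluates to $g(x,x)\neq 0$, hence one must have $\lambda = -2n$. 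Feeding $\lambda = -2n$ back into the expression for $\rho$ yields $\rho = 2n\,g$, so the manifold is Einstein; equivalently, by \thmref{thm:RlSl}(iv), the soliton constants are $(-2n,1,-1)$.

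For the converse, if the manifold is Einstein with $\rho = 2n\,g$, then
\[
\rho(R(\xi,x)y,z)+\rho(y,R(\xi,x)z)=2n\bigl\{g(R(\xi,x)y,z)+g(y,R(\xi,x)z)\bigr\},
\]
which vanishes by the standard skew-symmetry of the $(0,4)$-curvature tensor in its last two arguments.

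The only potential obstacle is the algebraic simplification after substitution, in particular verifying that no cancellation destroys the coefficient $(\lambda+2n)$, and choosing a legitimate test triple to deduce non-vanishing of the bracket; both are routine once the signature of $g$ and the Sasaki-like identities are in hand.
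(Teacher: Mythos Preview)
Your argument is correct and follows essentially the paper's approach: substitute the Sasaki-like form of $R(\xi,x)y$ and the $\eta$-Einstein form of $\rho$ into \eqref{Rrho}, factor out $(\lambda+2n)$, and specialize to force $\lambda=-2n$; your explicit treatment of the converse via skew-symmetry of the $(0,4)$-curvature tensor is a welcome addition, since the paper only argues the forward direction. One small slip: the aside ``by \thmref{thm:RlSl}(iv) the soliton constants are $(-2n,1,-1)$'' is misplaced, because that theorem concerns the soliton with potential $\xi$, whereas the given soliton has potential $k\xi$ and, by \thmref{thm:k=const}, constants $(-2n,\,k,\,-k)$; simply drop that remark.
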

\begin{proof}
Then, because of \eqref{curSl}, equality \eqref{Rrho} follows
\begin{equation}\label{rhorho}
\rho(x,y)\eta(z)+\rho(x,z)\eta(y)-2n g(x,y)\eta(z)-2n g(x,z)\eta(y)=0.
\end{equation}

By virtue of \eqref{SlRl-v-rho-k=const} and \eqref{rhorho}, we obtain
\begin{equation}\label{lgff}
\begin{array}{l}
(\lm+2n)\left\{g(\f x,\f y)\eta(z)
+g(\f x,\f z)\eta(y)\right\}=0.
\end{array}
\end{equation}

Equality \eqref{lgff} for $z=\xi$ provides
$
(\lm+2n)g(\f x,\f y)=0,
$
which is equivalent to 
\begin{equation*}\label{lmk2n}
\lm=-2n.
\end{equation*}
Therefore, condition \eqref{Rrho} implies that the manifold is Einstein.
\end{proof}


A non-vanishing Ricci tensor $\rho$ is called \emph{cyclic parallel} or  \emph{of Codazzi type} 
if it satisfies the condition \cite{Gray78}
\[
(\n_x \rho)(y,z) + (\n_y \rho)(z,x) + (\n_z \rho)(x,y) =0
\]
or
\[
(\n_x \rho)(y,z) = (\n_y \rho)(x,z),
\]
respectively.

It is said that a non-vanishing Ricci operator $Q$ is \emph{Ricci $\f$-symmetric} 
if it satisfies the condition \cite{DeSa08}
\[
\f^2(\n_x Q)y =0.
\]
In \cite{GhoDe17}, if the latter condition is satisfied for arbitrary vector fields on the manifold, it is called  
\emph{globally Ricci $\f$-symmetric}, and when these vector fields are orthogonal to $\xi$, then the manifold is called 
\emph{locally Ricci $\f$-symmetric}.

Similarly to \propref{prop:2-SletaEl}, we establish the truthfulness of the following 
\begin{proposition}\label{prop:3-SletaEl}
Let $\M$ be a 
Sasaki-like manifold admitting
a Ricci-like soliton with vertical potential. 
\begin{enumerate}
	\item[(i)] The manifold has a cyclic parallel  Ricci tensor if and only if it is Einstein.
	\item[(ii)] The Ricci tensor is of Codazzi type if and only the manifold is Einstein.
	\item[(iii)] The manifold is locally Ricci $\f$-symmetric.
	\item[(iv)] The manifold is globally Ricci $\f$-symmetric  if and only if it is Einstein.
\end{enumerate}
\end{proposition}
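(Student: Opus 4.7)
The plan is to handle all four items uniformly by working from the explicit formula \eqref{SlRl-nrho-k} for $\n\rho$, which already encodes the soliton hypothesis after \thmref{thm:k=const}, and by reducing each condition to the scalar equation $\lm+2n=0$. By \propref{prop:2-SletaEl} (or directly from \eqref{SlRl-v-rho-k=const} with $\lm=-2n$), that scalar equation is exactly the Einstein condition, so it suffices to show that each of (i), (ii), (iv) forces $\lm+2n=0$, whereas (iii) holds without any extra constraint.

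For (i), I would cyclically symmetrize \eqref{SlRl-nrho-k} in $x,y,z$. Using the symmetry $g(x,\f y)=g(\f x,y)$ from \eqref{strM2}, the six terms of the cyclic sum combine pairwise into a multiple of $g(x,\f y)\eta(z)+g(y,\f z)\eta(x)+g(z,\f x)\eta(y)$. Substituting $z=\xi$ and using $\f\xi=0$ collapses the expression to $-2(\lm+2n)g(x,\f y)=0$; since $g(\cdot,\f\cdot)$ is non-degenerate on $\ker\eta$, this forces $\lm=-2n$. For (ii), I would form $(\n_x\rho)(y,z)-(\n_y\rho)(x,z)$ from \eqref{SlRl-nrho-k}: the $\eta(z)$ terms cancel by the same symmetry, leaving $-(\lm+2n)\{g(x,\f z)\eta(y)-g(y,\f z)\eta(x)\}=0$; setting $y=\xi$ and choosing $x\in\ker\eta$ then yields $\lm=-2n$.

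For (iii) and (iv), I would pass from $\rho$ to the Ricci operator $Q$ defined by $\rho(x,y)=g(Qx,y)$. Reading off the right-hand side of \eqref{SlRl-nrho-k} as a metric pairing gives
\[
(\n_x Q)y=-(\lm+2n)\bigl\{g(x,\f y)\xi+\eta(y)\f x\bigr\}.
\]
Applying $\f^2$ and using $\f^2\xi=0$ together with $\f^3=-\f$ (both immediate from \eqref{strM}), I get $\f^2(\n_x Q)y=(\lm+2n)\eta(y)\f x$. For (iii) the hypothesis $x,y\perp\xi$ forces $\eta(y)=0$, so the right-hand side vanishes unconditionally. For (iv) the identity must hold for all $x,y\in\Gamma(TM)$; choosing $y$ with $\eta(y)\ne 0$ and $x\in\ker\eta$ with $\f x\ne 0$ (possible since $\dim M\ge 3$ and $\f|_{\ker\eta}$ is non-degenerate) forces $\lm+2n=0$, hence the Einstein conclusion.

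The only mildly delicate point is the conversion from \eqref{SlRl-nrho-k} to the operator form $(\n_x Q)y$ in items (iii)--(iv); once one recognizes $g(x,\f y)\eta(z)=g\bigl(g(x,\f y)\xi,z\bigr)$ and $g(x,\f z)\eta(y)=g(\eta(y)\f x,z)$, the identification is automatic. All other steps are routine tensorial manipulations using \eqref{strM} and \eqref{strM2}, together with the fact that $g(\cdot,\f\cdot)$ is non-degenerate on the contact distribution, which ensures the final scalar contractions are non-trivial.
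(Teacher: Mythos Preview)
Your argument is correct and is exactly the computation the paper has in mind: the paper does not spell out a proof but only says the result is established ``similarly to \propref{prop:2-SletaEl}'', meaning one works from the explicit formula \eqref{SlRl-nrho-k} for $\n\rho$ (derived in the proof of \propref{prop:nrho-SletaEl}) and reduces each condition to $\lm+2n=0$, which by \eqref{SlRl-v-rho-k=const} is precisely the Einstein condition. Your case-by-case verification --- the cyclic symmetrization in (i), the antisymmetrization in (ii), and the passage to the operator form $(\n_x Q)y=-(\lm+2n)\{g(x,\f y)\xi+\eta(y)\f x\}$ followed by application of $\f^2$ in (iii)--(iv) --- carries this out cleanly and matches the intended route.
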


In \cite{ChaKaw07}, the notion of \emph{almost pseudo Ricci symmetric manifolds} is  introduced 
by the following condition for its non-vanishing Ricci tensor 
\begin{equation}\label{apRs-def}
(\n_x \rho)(y,z) = \{\al(x) + \bt(x)\}\rho(y,z) + \al(y)\rho(x,z) + \al(z)\rho(x,z),
\end{equation}
where $\al$ and $\bt$ are non-vanishing 1-forms.

\begin{proposition}\label{prop:6-SletaEl}
Let $\M$ be a Sasaki-like manifold admitting
a Ricci-like soliton with vertical potential.
Then the manifold is almost pseudo Ricci symmetric 
if and only if the manifold is Einstein.
\end{proposition}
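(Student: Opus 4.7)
The plan is to mirror the strategy of \propref{prop:2-SletaEl} and \propref{prop:3-SletaEl}: I will substitute into the defining identity \eqref{apRs-def} the two explicit formulae that are already available on our manifold, namely
\[
\rho = -\lm\, g + (\lm+2n)\,\eta\otimes\eta
\]
from \eqref{SlRl-v-rho-k=const}, together with
\[
(\n_x\rho)(y,z) = -(\lm+2n)\{g(x,\f y)\eta(z) + g(x,\f z)\eta(y)\}
\]
from \eqref{SlRl-nrho-k}, and then use a chain of specializations of $x$, $y$, $z$ to extract the constraints on the 1-forms $\al$, $\bt$ and on the constant $\lm$.

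First I set $y=z=\xi$. The left-hand side vanishes because $\f\xi=0$, and the right-hand side, after inserting $\rho(x,\xi)=2n\,\eta(x)$ and $\rho(\xi,\xi)=2n$ from \eqref{curSl}, collapses to a purely algebraic identity from which I read off
\[
\al(x) + \bt(x) = -2\al(\xi)\,\eta(x).
\]
Next I put $y=\xi$ with $z$ arbitrary and feed in the previous relation; a further specialization $x=\xi$ annihilates the $g(x,\f z)$ terms and yields $\al(z)=\al(\xi)\,\eta(z)$. Hence both $\al$ and $\bt$ turn out to be collinear with $\eta$.

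Substituting the reduced forms of $\al$ and $\bt$ back into the equation with $y=\xi$, $z$ arbitrary leaves the identity
\[
(\lm+2n)\,g(x,\f z) = \lm\,\al(\xi)\,\{g(x,z)-\eta(x)\eta(z)\}.
\]
Restricting to $x,z\in\ker\eta$ and applying \eqref{gg} and the last identity in \eqref{strM}, the left-hand side reduces to $(\lm+2n)\,\g(x,z)$ and the right-hand side to $\lm\,\al(\xi)\,g(x,z)$. Since the B-metrics $g$ and $\g$ are linearly independent symmetric bilinear forms on the contact distribution, both coefficients must vanish, forcing $\lm=-2n$; by \eqref{SlRl-v-rho-k=const} this is exactly $\rho=2n\,g$, so $\M$ is Einstein. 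The converse is immediate: when $\lm=-2n$ the formula \eqref{SlRl-nrho-k} gives $\n\rho=0$, so \eqref{apRs-def} holds trivially.

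The main bookkeeping obstacle I anticipate is the ordered sequence of specializations needed to pin $\al$ and $\bt$ down as multiples of $\eta$; once this is in hand, the final separation of $g$ and $\g$ on $\ker\eta$ is routine and closely parallels the argument employed in \propref{prop:2-SletaEl}.
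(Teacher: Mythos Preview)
Your forward implication follows the paper's route almost verbatim: specialize \eqref{apRs-def} at $\xi$ to force $\al$ and $\bt$ into the span of $\eta$, feed this back in, and extract the scalar constraint $\lm\,\al(\xi)\,g(\f x,\f z)+(\lm+2n)\,g(x,\f z)=0$, which is exactly the relation the paper isolates. Your appeal to the linear independence of $g$ and $\g$ on $\ker\eta$ is a tidy way of reading off $\lm+2n=0$ (and, implicitly, $\lm\,\al(\xi)=0$); the paper argues directly from the same equation to the same pair of conclusions.

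The gap is in your converse. Writing that \eqref{apRs-def} ``holds trivially'' once $\n\rho=0$ is not justified: the right-hand side of \eqref{apRs-def} is
\[
\{\al(x)+\bt(x)\}\rho(y,z)+\al(y)\rho(x,z)+\al(z)\rho(x,y),
\]
which does not vanish automatically for given $1$-forms $\al,\bt$ merely because the left-hand side does. With $\rho=2n\,g$ the condition becomes
\[
\{\al(x)+\bt(x)\}g(y,z)+\al(y)g(x,z)+\al(z)g(x,y)=0,
\]
and one must still determine which $\al,\bt$ are admissible. The paper actually carries this out: specializing again at $\xi$ it recovers $\al=\al(\xi)\eta$, $\bt=-2\al(\xi)\eta$ and then shows the remaining identity forces $\al(\xi)=0$, so that \eqref{apRs-def} is satisfied with these $1$-forms. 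You should supply this argument (or at minimum exhibit the choice of $\al,\bt$ that makes the right-hand side vanish) rather than invoke triviality.
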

\begin{proof}
Substituting  \eqref{SlRl-nrho-k} in the defining condition \eqref{apRs-def} for an almost pseudo Ricci symmetric manifold gives us the following equality
\begin{equation}\label{apRs}
\begin{array}{l}
\{\al(x)+\bt(x)\}\{\lm g(\f y,\f z)+2n\,\eta(y)\eta(z)\} \\[4pt]
+\al(y)\{\lm g(\f x,\f z)+2n\,\eta(x)\eta(z)\}+\al(z)\{\lm g(\f x,\f y)+2n\,\eta(x)\eta(y)\}\\[4pt]
+(\lm+2n)\{ g(x, \f y)\eta(z)+g(x, \f z)\eta(y)\}=0
\end{array}
\end{equation}

Setting successively $x$, $y$ and $z$ as $\xi$ in the latter equality and then combining the obtained equalities, we get 
\begin{equation}\label{albt}
\al=\al(\xi)\eta,\qquad \bt=-2\al(\xi)\eta. 
\end{equation}
Imposing these equalities in \eqref{apRs} gives
\[
\begin{array}{l}
\lm\al(\xi)\{-2g(\f y,\f z)\eta(x)+g(\f x,\f z)\eta(y)+g(\f x,\f y)\eta(z)\} \\[4pt]
+(\lm+2n)\{ g(x, \f y)\eta(z)+g(x, \f z)\eta(y)\}=0,
\end{array}
\]
which for $z=\xi$ implies 
\[
\lm\al(\xi)g(\f x,\f y)+(\lm+2n) g(x, \f y)=0.
\]
The latter equality is fulfilled if and only if $\lm=-2n$ and $\al(\xi)=0$.

Vice versa, let $\M$ be Einstein, i.e. $\rho = 2n g$. Then, equality 
\eqref{apRs-def} takes the following form
\begin{equation}\label{albtggg}
\{\al(x)+\bt(x)\}g(y,z)+\al(y) g(x, z)+\al(z) g(x,y)=0.
\end{equation}
Again by setting successively $x$, $y$ and $z$ as $\xi$ and combining the obtained equalities,
we get the formulas \eqref{albt} and then \eqref{albtggg} implies
\[
\al(\xi)\{-2\eta(x)g(y,z)+\eta(y) g(x, z)+\eta(z) g(x,y)\}=0
\]
for arbitrary $x$, $y$, $z$ and therefore $\al(\xi)=0$ holds.
Thus, \eqref{apRs-def} is satisfied providing $\al(\xi)$ vanishes, which completes the proof.
\end{proof}

A manifold is called \emph{special weakly Ricci symmetric} if its non-vanishing Ricci tensor 
satisfies the following condition  \cite{SinKha01}
\begin{equation}\label{swRs-def}
(\n_x \rho)(y,z) = 2\al(x)\rho(y,z) + \al(y)\rho(x,z) + \al(z)\rho(x,z),
\end{equation}
where $\al$  is a non-vanishing 1-form.

\begin{proposition}\label{prop:7-SletaEl}
Let $\M$ be a Sasaki-like manifold admitting
a Ricci-like soliton with vertical potential.
Then the manifold is special weakly Ricci symmetric  
if and only if the manifold is Einstein.
\end{proposition}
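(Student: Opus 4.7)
The plan is to mirror almost verbatim the structure of the proof of \propref{prop:6-SletaEl}. I will substitute \eqref{SlRl-v-rho-k=const} (giving $\rho=-\lm g+(\lm+2n)\eta\otimes\eta$) and the covariant derivative \eqref{SlRl-nrho-k} into the defining condition \eqref{swRs-def} for special weak Ricci symmetry. This produces a single tensorial identity involving $\al$, $\lm$, $g$, $\f$, and $\eta$ that is the analogue of \eqref{apRs}, except that the coefficient $\{\al(x)+\bt(x)\}$ is replaced by $2\al(x)$.

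Next, I will probe this identity along the Reeb direction. Setting $x=\xi$ kills the left-hand side, since $g(\xi,\f\,\cdot\,)=0$ and $\eta(\,\cdot\,)$ remains untouched; the resulting equation reads, schematically,
\begin{equation*}
0=2\al(\xi)\rho(y,z)+2n\bigl\{\al(y)\eta(z)+\al(z)\eta(y)\bigr\}.
\end{equation*}
Taking $y=z=\xi$ and using $\rho(\xi,\xi)=2n$ from \eqref{curSl} yields $8n\,\al(\xi)=0$, hence $\al(\xi)=0$. Feeding this back and putting $z=\xi$ collapses the remainder to $\al(y)=0$ for every $y$, so $\al\equiv0$. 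Substituting $\al=0$ into \eqref{swRs-def} leaves $\n\rho=0$, and \propref{prop:nrho-SletaEl} then identifies this with $\M$ being Einstein. Alternatively, one can insert $\al=0$ into the substituted identity and read off $\lm=-2n$, which by \thmref{thm:RlSl}(iv) is again the Einstein case.

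For the converse, assume $\M$ is Einstein, so $\rho=2n\,g$ and $\n\rho=0$. Then \eqref{swRs-def} reduces to $2\al(x)g(y,z)+\al(y)g(x,z)+\al(z)g(x,y)=0$. Setting $y=z=\xi$ forces $\al=-\al(\xi)\eta$; substituting this back with $z=\xi$ and arbitrary $x,y$ yields $\al(\xi)\{g(x,y)-3\eta(x)\eta(y)\}=0$, whence $\al(\xi)=0$ and so $\al=0$. Hence the defining condition is trivially verified, exactly as in the corresponding passage of \propref{prop:6-SletaEl}.

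The main difficulty is not computational but interpretive: as in \propref{prop:6-SletaEl}, the derivation squeezes the defining 1-form $\al$ down to the zero 1-form, so the clause requiring $\al$ to be ``non-vanishing'' in \eqref{swRs-def} has to be understood in the same loose sense adopted there. All the algebra is a straightforward application of \eqref{strM}, \eqref{strM2} and \eqref{curSl}, with the Reeb substitutions doing essentially all the work.
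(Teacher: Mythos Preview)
Your argument is correct, and both directions go through exactly as you describe. The difference from the paper is one of economy rather than substance. The paper does not redo the substitution of \eqref{SlRl-v-rho-k=const} and \eqref{SlRl-nrho-k} into \eqref{swRs-def}; instead it simply observes that \eqref{swRs-def} is the special case $\al=\bt$ of \eqref{apRs-def}, and then invokes the intermediate result \eqref{albt} already obtained inside the proof of \propref{prop:6-SletaEl}. Setting $\al=\bt$ in $\al=\al(\xi)\eta$, $\bt=-2\al(\xi)\eta$ forces $\al(\xi)=0$, hence $\al=0$, hence $\n\rho=0$, and the conclusion (including the converse) is inherited directly from \propref{prop:6-SletaEl} and \propref{prop:nrho-SletaEl}. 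Your route recomputes by probing with $x=\xi$, $y=z=\xi$, then $z=\xi$; this is perfectly valid and arguably more self-contained, but it repeats work that the paper has packaged into \eqref{albt}. Your closing remark about the ``non-vanishing'' clause for $\al$ applies equally to the paper's version.
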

\begin{proof}
Comparing \eqref{swRs-def} with \eqref{apRs-def}, we deduce that 
an almost pseudo Ricci symmetric manifold with $\al=\bt$ is a special weakly Ricci symmetric manifold.
Then, \eqref{albt} implies $\al=0$ and therefore the manifold has $\n\rho=0$. 
By virtue of \propref{prop:6-SletaEl}, we conclude the validity of the statement.
\end{proof}

Finally in the series, for the studied manifold $\M$, let us consider the curvature condition 
$Q\cdot R=0$, which can read as
\begin{equation}\label{QdotR-def}
R(x,y,z,Qw) - R(Qx,y,z,w) - R(x,Qy,z,w) - R(x,y,Qz,w) = 0. 
\end{equation}

\begin{proposition}\label{prop:8-SletaEl}
Let $\M$ be a Sasaki-like manifold with the curvature property $Q\cdot R=0$.
Then the manifold does not admit
a Ricci-like soliton with vertical potential.
\end{proposition}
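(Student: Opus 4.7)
The plan is to argue by contradiction. Assume such a soliton exists and combine this assumption with the hypothesis $Q\cdot R = 0$ to reach a statement forbidden by the Sasaki-like curvature identities \eqref{curSl}. The first step invokes \thmref{thm:k=const}: the soliton forces $\M$ to be $\eta$-Einstein with $\rho = -\lm\, g + (\lm+2n)\,\eta\otimes\eta$, so that the Ricci operator $Q$, defined by $g(Qx,y) = \rho(x,y)$, takes the explicit form
\[
Qx = -\lm\, x + (\lm+2n)\,\eta(x)\,\xi.
\]

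The second step is to substitute this expression for $Q$ into each of the four slots of the defining relation \eqref{QdotR-def}. Since $Q$ is a multiple of the identity plus a rank-one $\eta$-piece, the four $\lm$-terms collect (with signs from \eqref{QdotR-def}) into $2\lm\, R(x,y,z,w)$, while the remaining terms form $(\lm+2n)$ times the combination
\[
\eta(w)R(x,y,z,\xi) - \eta(x)R(\xi,y,z,w) - \eta(y)R(x,\xi,z,w) - \eta(z)R(x,y,\xi,w).
\]
Each contraction is rewritten in closed form via \eqref{curSl} and the standard symmetries of $R$ (e.g. $R(x,y,\xi,w) = \eta(y)g(x,w)-\eta(x)g(y,w)$, $R(\xi,y,z,w)=g(y,z)\eta(w)-\eta(z)g(y,w)$, etc.). This is the step most prone to sign errors and is the principal obstacle: the two $\eta(w)\eta(x)g(y,z)$ and two $\eta(w)\eta(y)g(x,z)$ contributions must be shown to cancel exactly, leaving only two pairs of $\eta(x)\eta(z)g(y,w)$ and $\eta(y)\eta(z)g(x,w)$, whose sum recombines as $-2\eta(z)R(x,y,\xi,w)$. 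After this collapse $Q\cdot R = 0$ reduces to the single identity
\[
\lm\, R(x,y,z,w) = (\lm + 2n)\,\eta(z)\, R(x,y,\xi,w).
\]

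The third and final step is immediate: setting $z = \xi$ in this identity gives $\lm\,R(x,y,\xi,w) = (\lm+2n)\,R(x,y,\xi,w)$, hence $-2n\,R(x,y,\xi,w) = 0$. Because $n \geq 1$, this forces $R(x,y,\xi,w) \equiv 0$, in plain contradiction with the Sasaki-like formula $R(x,y,\xi,w) = \eta(y)g(x,w)-\eta(x)g(y,w)$ from \eqref{curSl}, which is manifestly non-zero (for instance, $y=\xi$ and $x = w \in \ker\eta$ with $g(x,x) \neq 0$, possible since $g|_{\ker\eta}$ has signature $(n,n)$). Therefore no such soliton can exist under the curvature condition $Q\cdot R = 0$, which proves the proposition.
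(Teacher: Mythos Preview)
Your proof is correct. The substitution of $Q = -\lm\,\I + (\lm+2n)\,\eta\otimes\xi$ into \eqref{QdotR-def} is carried out accurately; in particular, the cancellation of the four $\eta(\cdot)\eta(w)$ terms and the survival of exactly $-2\eta(z)R(x,y,\xi,w)$ is verified, and the specialization $z=\xi$ yields the contradiction with $R(\cdot,\cdot)\xi \neq 0$ from \eqref{curSl}.

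The paper proceeds differently: instead of expanding $Q$ in all four slots, it first contracts \eqref{QdotR-def} over $x$ and $w$ (using the symmetry of $Q$) to obtain the purely Ricci identity $\rho(Qy,z)+\rho(y,Qz)=0$, and only then inserts the $\eta$-Einstein form \eqref{SlRl-v-rho-k=const}; evaluating at $y=z=\xi$ gives $8n^2=0$, a contradiction. The trade-off is this: the paper's trace bypasses the need for the explicit Sasaki-like formulas for $R(\xi,\cdot)\cdot$ and is essentially a two-line argument, but it hides where the incompatibility originates. Your direct approach is longer and requires the careful bookkeeping you flagged, but it produces the sharper structural identity $\lm\,R(x,y,z,w)=(\lm+2n)\,\eta(z)\,R(x,y,\xi,w)$, which makes transparent that the obstruction lies in the $\xi$-sector of $R$ and would persist for any value of $\lm$.
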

\begin{proof}
Taking the trace of \eqref{QdotR-def} for $x=e_i$, $w=e_j$ and due to the symmetry of $Q$, we obtain
\[
\rho(Qy,z)+\rho(y,Qz)=0.
\] 
After that we use \eqref{SlRl-v-rho-k=const} and get
\[
\lm^2 g(\f y,\f z)-8n^2 \eta(y)\eta(z)=0,
\]
which has no solution. 
\end{proof}


\section{Parallel symmetric second-order covariant tensor on a Sasaki-like almost contact B-metric manifold }

In \cite{Eis}, L.P. Eisenhart proved that if a positive definite Riemannian manifold admits a second-order parallel
symmetric tensor other than a constant multiple of the metric tensor,
then it is reducible.
In \cite{Levy}, H. Levy proved that a second-order parallel symmetric non-singular (with non-vanishing determinant)
tensor in a space of constant curvature is proportional to the metric tensor.
In \cite{Sha}, R. Sharma proved a generalization over Levy's theorem for dimension greater than two in non-flat real space forms.

Sasaki-like almost contact B-metric manifolds are not need to be real space forms in general.
We now prove an assertion of the kind of the theorems above.

\begin{proposition}\label{prop:h-Sl}
On an arbitrary Sasaki-like manifold, every symmetric
second-order covariant tensor that is parallel with respect to the Levi-Civita connection of the B-metric,
is a constant multiple of this metric.
\end{proposition}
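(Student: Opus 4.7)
The plan is to exploit the fact that $\nabla h=0$ forces a rigid algebraic compatibility between $h$ and the curvature tensor $R$ of the Levi-Civita connection, and then to invoke the explicit formulas for $R$ acting on $\xi$ recorded in \eqref{curSl}.

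The starting point is the standard Ricci identity for a parallel symmetric $(0,2)$-tensor, which follows from $\nabla h=0$ by commuting two covariant differentiations:
\[
h(R(x,y)z,w)+h(z,R(x,y)w)=0.
\]
I would then specialise $x=\xi$ and substitute the Sasaki-like identity $R(\xi,y)z=g(y,z)\xi-\eta(z)y$ from \eqref{curSl}. Expanding yields
\[
g(y,z)h(\xi,w)+g(y,w)h(z,\xi)-\eta(z)h(y,w)-\eta(w)h(z,y)=0.
\]
Setting $w=\xi$ in this relation produces the explicit expression
\[
h(y,z)=h(\xi,\xi)\,g(y,z)+\eta(y)h(z,\xi)-\eta(z)h(y,\xi).
\]

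Next, comparing this formula with the one obtained by swapping $y\leftrightarrow z$ and using the symmetry of $h$, one finds $\eta(y)h(z,\xi)=\eta(z)h(y,\xi)$; taking $z=\xi$ produces the key pointwise identity $h(y,\xi)=h(\xi,\xi)\,\eta(y)$. Substituting this back collapses the previous display to
\[
h(y,z)=h(\xi,\xi)\,g(y,z),
\]
so $h=c\,g$ holds with $c:=h(\xi,\xi)$ a smooth function on $M$.

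Finally, since $\nabla h=0$ and $\nabla g=0$, the identity $h=c\,g$ immediately gives $(\D c)(x)\,g(y,z)=0$ for all $x,y,z$, so $\D c=0$ by non-degeneracy of $g$, and hence $c$ is constant. The only care required in this plan is the order of the specialisations in the Ricci identity: once one chooses $x=\xi$ and then $w=\xi$, the Sasaki-like form of $R(\xi,\cdot)$ together with the symmetry of $h$ makes the reduction $h=c\,g$ essentially forced; no obstacle of a deeper nature appears, because the cases that would normally require constant sectional curvature here follow from the universal curvature identities on $\ker\eta\oplus\langle\xi\rangle$ enjoyed by every Sasaki-like manifold.
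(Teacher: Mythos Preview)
Your proof is correct and shares the paper's starting point: the Ricci identity for a parallel symmetric tensor together with the Sasaki-like curvature formulae from \eqref{curSl}. The difference lies in how the two arguments pass from $h(\cdot,\xi)=h(\xi,\xi)\,\eta$ to the full identity $h=h(\xi,\xi)\,g$. The paper first proves $h(\xi,\xi)$ is constant, then \emph{differentiates} the relation $h(x,\xi)=h(\xi,\xi)\,\eta(x)$ covariantly in $y$ and uses $\nabla_y\xi=-\f y$ to obtain $h(x,\f y)=h(\xi,\xi)\,g(x,\f y)$, finishing by replacing $y$ with $\f y$. You instead stay purely algebraic: specialising the Ricci identity at $x=\xi$ (rather than $z=w=\xi$) already yields an equation involving the full tensor $h(y,z)$, and the symmetry of $h$ then forces $h=h(\xi,\xi)\,g$ directly; constancy of $h(\xi,\xi)$ drops out afterwards from $\nabla h=0$. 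Your route is slightly more economical, avoiding the extra differentiation and the explicit use of $\nabla\xi=-\f$; the paper's route, on the other hand, makes more visible how the Sasaki-like condition on $\nabla\xi$ enters.
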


\begin{proof}
Let $h$ be a symmetric 
$(0,2)$-tensor field which is parallel, i.e. $\n h=0$ with respect to the Levi-Civita connection of $g$.
We use the Ricci identity for $h$, i.e.
\[
\left(\n_x\n_y h\right)(z,w)-\left(\n_y\n_x h\right)(z,w)=-h\left(R(x,y)z,\,w\right)-h\left(z,\,R(x,y)w\right).
\]
Since $\n h=0$, the latter identity implies
\[
h\left(R(x,y)z,\,w\right)+h\left(z,\,R(x,y)w\right)=0
\]
and therefore the following property is valid
\begin{equation}\label{hxi}
	h\left(R(x,y)\xi,\,\xi\right)=0.
\end{equation}

Let the considered manifold be Sasaki-like. Then, we replace the expression of $R(x,y)\xi$ from \eqref{curSl} in \eqref{hxi} and
we obtain
\[
h(x,\xi)\eta(y)-h(y,\xi)\eta(x)=0.
\]
If we put $y=\xi$ in the latter equality, it follows
\begin{equation}\label{hxxi-Sl}
	h(x,\xi)=h(\xi,\xi)\eta(x).
\end{equation}
Since $\eta(\n_x\xi)$ vanishes because of $g(\xi,\xi)=1$, then \eqref{hxxi-Sl} implies
\begin{equation}\label{hnxxi-Sl}
	h(\n_x \xi,\xi)=0.
\end{equation}
From the expression of $\left(\n_x h\right)(y,z)$ in the case $\n h=0$ and $y=z=\xi$ we have
\[
x\bigl(h(\xi,\xi)\bigr)=2h(\n_x \xi,\xi),
\]
which means that $h(\xi,\xi)$ is a constant because of \eqref{hnxxi-Sl}.

Next, we take the covariant derivative of \eqref{hxxi-Sl} with respect to $y$ and we get
$h(x,\f y)=h(\xi,\xi)g(x,\f y)$, bearing in mind the first equality of \eqref{curSl}. 
We substitute $y$ for $\f y$
in the latter equality for $h$ and use \eqref{strM} and \eqref{hxxi-Sl} to obtain
the following
\begin{equation*}\label{hxy-Sl}
	h(x,y)=h(\xi,\xi)g(x,y).
\end{equation*}
The latter equality means that $h$ is a constant multiple of $g$ and therefore we proved the following
\end{proof}

Let us apply this assertion to a Ricci-like soliton.

\begin{theorem}\label{thm:h-Sl}
Let $\M$ be a Sasaki-like manifold of dimension $2n+1$ and let $h$ be the tensor $\frac12 \mathcal{L}_{\xi} g  + \rho + \mu\, \g  + \nu\, \eta\otimes \eta$, where $\mu,\nu\in\R$. The tensor $h$ is parallel with respect to $\n$ of $g$ if and only if	 $\M$ admits a Ricci-like soliton with potential $\xi$ and constants $(\lm,\mu,\nu)$, where $\lm=-h(\xi,\xi)=-\mu-\nu-2n$.
\end{theorem}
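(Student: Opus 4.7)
\smallskip

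The plan is to reduce the theorem directly to \propref{prop:h-Sl} in one direction and to a trivial observation in the other.

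For the sufficiency direction, assume $\M$ admits a Ricci-like soliton with potential $\xi$ and constants $(\lm,\mu,\nu)$. Then the defining equation \eqref{defRl} reads exactly as $h = -\lm\,g$, so $h$ is a constant multiple of the B-metric $g$. Since $\n g = 0$ for the Levi-Civita connection, $h$ is parallel. This direction is immediate and requires no computation.

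For the necessity direction, assume $h$ is parallel with respect to $\n$. Since $h$ is manifestly symmetric (both $\frac12\LL_\xi g$ and $\rho$ are symmetric, while $\g$ and $\eta\otimes\eta$ are symmetric by construction), \propref{prop:h-Sl} applies and forces $h = h(\xi,\xi)\,g$. Setting $\lm := -h(\xi,\xi)$, this rewrites as
\[
\frac12 \LL_{\xi} g + \rho + \lm\, g + \mu\, \g + \nu\, \eta\otimes \eta = 0,
\]
which is precisely the Ricci-like soliton condition with potential $\xi$ and constants $(\lm,\mu,\nu)$.

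It then remains to identify the value $h(\xi,\xi)$. Here I would use the Sasaki-like identities available earlier in the paper: the computation $\frac12 \LL_{\xi} g = -\g + \eta\otimes\eta$ carried out just before \eqref{SlRl-rho}, the curvature identity $\rho(\xi,\xi)=2n$ from \eqref{curSl}, and the fact that $\g(\xi,\xi)=1$ (which follows from \eqref{gg} since $\f\xi=0$ and $\eta(\xi)=1$). Plugging $x=y=\xi$ into the definition of $h$ and summing these contributions yields $h(\xi,\xi) = -1 + 1 + 2n + \mu + \nu = 2n + \mu + \nu$, so that $\lm = -h(\xi,\xi) = -\mu - \nu - 2n$, as required.

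The main (and really only) obstacle is this final evaluation of $h(\xi,\xi)$: one needs to be careful to use the Sasaki-like normalization $\frac12 \LL_{\xi} g = -\g + \eta\otimes\eta$ rather than the usual Sasakian form, and to remember that $\g(\xi,\xi)=1$ rather than $0$ in the B-metric setting. Every other step is formal, reducing the theorem to the algebraic content of \propref{prop:h-Sl} and the definition \eqref{defRl}.
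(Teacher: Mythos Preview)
Your proposal is correct and follows essentially the same approach as the paper: both directions reduce to \propref{prop:h-Sl} and the trivial observation that $h=-\lm\,g$ is parallel, and the evaluation $h(\xi,\xi)=2n+\mu+\nu$ is carried out via the same Sasaki-like identities $\tfrac12\LL_\xi g=-\g+\eta\otimes\eta$ and $\rho(\xi,\xi)=2n$. The only cosmetic difference is that the paper first simplifies $h$ to $\rho+(\mu-1)\g+(\nu+1)\eta\otimes\eta$ before evaluating at $(\xi,\xi)$, whereas you evaluate term by term directly.
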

\begin{proof}
Since $h=\frac12 \mathcal{L}_{\xi} g  + \rho + \mu\, \g  + \nu\, \eta\otimes \eta$
and for Sasaki-like manifolds we have $\left(\mathcal{L}_{\xi} g\right)(x,y)=g(\n_x\xi,y)+g(x,\n_y\xi)=-2g(x,\f y)$, i.e.
$\frac12 \mathcal{L}_{\xi} g=-\g+\eta\otimes \eta$. Then $h$ takes the form
\begin{equation}\label{h-Sl}
	h=\rho + (\mu-1) \g  + (\nu+1) \eta\otimes \eta.
\end{equation}

Now, let $h$ be parallel. 
According to \propref{prop:h-Sl}, $h=h(\xi,\xi)g$ holds true in this case. From \eqref{h-Sl}
and the last equality in \eqref{curSl}, we obtain
$h(\xi,\xi)=2n + \mu + \nu$  and therefore
\(
	h=(2n + \mu + \nu)g.
\) 
The latter equality and \eqref{defRl} imply that there is a Ricci-like soliton with constants $(\lm,\mu,\nu)$,
where $\lm=-\mu-\nu-2n$.

Vice versa, let $\M$ admit a Ricci-like soliton with constants $(\lm,\mu,\nu)$, i.e. \eqref{defRl} is valid.
The latter condition can be rewritten as $h=-\lm g$.
Bearing in mind that $\lm$ is constant and $g$ is parallel, it follows that $h$ is parallel with respect to $\n$, too.
\end{proof}


\section{Example of an Einstein-like Sasaki-like manifold, admitting a Ricci-like soliton 
with potential Reeb vector field}


In Example 2 of \cite{IvMaMa45}, it is given
a Lie group $G$ of
dimension $5$ (i.e. $n=2$)
 with a basis of left-invariant vector fields $\{e_0,\dots, e_{4}\}$
and the corresponding Lie algebra is 
 defined by the commutators
\begin{equation*}\label{comEx1}
\begin{array}{ll}
[e_0,e_1] = p e_2 + e_3 + q e_4,\quad &[e_0,e_2] = - p e_1 -
q e_3 + e_4,\\[4pt]
[e_0,e_3] = - e_1  - q e_2 + p e_4,\quad &[e_0,e_4] = q e_1
- e_2 - p e_3,\qquad p,q\in\R.
\end{array}
\end{equation*}
Then, $G$ is equipped with an almost contact B-metric structure by
\begin{equation*}\label{strEx1}
\begin{array}{l}
g(e_0,e_0)=g(e_1,e_1)=g(e_2,e_2)=-g(e_{3},e_{3})=-g(e_{4},e_{4})=1,
\\[4pt]
g(e_i,e_j)=0,\quad
i,j\in\{0,1,\dots,4\},\; i\neq j,
\\[4pt]
\xi=e_0, \quad \f  e_1=e_{3},\quad  \f e_2=e_{4},\quad \f  e_3=-e_{1},\quad \f  e_4=-e_{2}.
\end{array}
\end{equation*}
There are computed the components of $\n$ for $g$ and the non-zero of them are the following
\begin{equation}\label{nei}
\begin{array}{c}
\begin{array}{ll}
\n_{e_0} e_1 = p\, e_2 + q\, e_4,\qquad & \n_{e_0} e_2 = - p\, e_1 - q\, e_3, \\[4pt]
\n_{e_0} e_3 = - q\, e_2 + p\, e_4,\qquad &  \n_{e_0} e_4 = q\, e_1 - p\, e_3,  
\end{array}
\\
\begin{array}{c}\\[-8pt]
\n_{e_1}e_0 = - e_3,\quad \n_{e_2} e_0 = - e_4,\quad \n_{e_3} e_0 = e_1, \quad \n_{e_4}e_0 = e_2,
\\[4pt]
\n_{e_1}e_3 = \n_{e_2} e_4 = \n_{e_3} e_1 = \n_{e_4}e_2 = - e_0.
\end{array}
\end{array}
\end{equation}
It is verified that the constructed almost contact B-metric manifold
$(G,\f,\allowbreak{}\xi,\allowbreak{}\eta,\allowbreak{}g)$ is Sasaki-like.

The components of the curvature tensor $R_{ijkl}=g(R(e_i,e_j)e_k,e_l)$ 
and those of the Ricci tensor  $\rho_{ij}=\rho(e_i,e_j)$ are computed for the same manifold in \cite{Man62}.
The non-zero of them are determined by the following equalities  and the usual properties $R_{ijkl}=-R_{jikl}=-R_{ijlk}$: 
\begin{equation*}\label{Rex1}
\begin{array}{l}
R_{0110}=R_{0220}=R_{1234}=R_{1432}=R_{2341}=R_{3412}=R_{1331}=R_{2442}=1,\\[4pt]
R_{0330}=R_{0440}=-1,\qquad \rho_{00}=4.
\end{array}
\end{equation*}
Since $\rho=4\,\eta\otimes\eta$ is satisfied, then $(G,\f,\allowbreak{}\xi,\allowbreak{}\eta,\allowbreak{}g)$
is $\eta$-Einstein with constants  
\begin{equation}\label{abcS}
(a,b,c)=(0,0,4).
\end{equation}
Moreover, it is clear that $\tau=\ttt=4$.

Also, there are computed the components $\left(\LL_\xi g\right)_{ij}=\left(\LL_\xi g\right)(e_i,e_j)$ and
the non-zero of them are the following
$
\left(\LL_\xi g\right)_{13}=\left(\LL_\xi g\right)_{24}=\left(\LL_\xi g\right)_{31}=\left(\LL_\xi g\right)_{42}=2,
$ 
which means that $\LL_\xi g=-2\g+2\eta\otimes\eta$. 
 
After that, it is obtained that \eqref{defRl-v} is satisfied for 
\begin{equation}\label{lmnS}
(\lm,\mu,\nu)=(0,1,-5)
\end{equation}
 and therefore $(G,\f,\allowbreak{}\xi,\allowbreak{}\eta,\allowbreak{}g)$
admits a Ricci-like soliton with potential $\xi$. 
Therefore, the condition $\lm+\mu+\nu=-2n$ is fulfilled and this 
is in unison with \thmref{thm:RlSl}.

Now, using \eqref{nei} and the only non-zero component $\rho_{00}=4$ of $\rho$, 
we compute the components of $(\n_i \rho)_{jk}=(\n_{e_i} \rho)(e_j, e_k)$ of $\n\rho$ 
and the non-zero ones of them are the following
\begin{equation*}\label{nrho-ex}
(\n_1 \rho)_{30}=(\n_2 \rho)_{40}=(\n_3 \rho)_{10}=(\n_4 \rho)_{20}=4
\end{equation*}
and their  symmetric about $j$ and $k$. 
These results are in accordance with \propref{prop:rho-Sl}. 

In conclusion, the constructed 5-dimensional example of a Sasaki-like manifold 
with the results in \eqref{abcS} and \eqref{lmnS} 
supports also \thmref{thm:RlSl}, \thmref{thm:k=const} for $k=1$ and 
it agrees with \propref{prop:h-Sl} and 
\thmref{thm:h-Sl} for the trivial case $h=0$.

\subsection*{Acknowledgment}
The author was supported by projects MU19-FMI-020 and FP19-FMI-002 of the Scientific Research Fund,
University of Plovdiv Paisii Hilendarski, Bulgaria.

\end{document}